\newtheorem{theorem}{Theorem}[section]
\newtheorem{lemma}[theorem]{Lemma}
\numberwithin{equation}{section}
\theoremstyle{definition}
\newtheorem{remark}[theorem]{Remark}
\def\leq{\leqslant }
\def\geq{\geqslant}
\begin{document}
\title[Rates of mixing for the Weil-Petersson geodesic flow I]{Rates of mixing for the Weil-Petersson geodesic flow I: \\ no rapid mixing in non-exceptional moduli spaces}

\author{Keith Burns}
\address{Keith Burns: Department of Mathematics, Northwestern University, 2033
Sheridan Road, Evanston, IL 60208-2730 USA.}
\email{burns@math.northwestern.edu.}
\urladdr{http://www.math.northwestern.edu/~burns/}

\author{Howard Masur}
\address{Howard Masur: Department of Mathematics, University of Chicago, 5734 S.
University, Chicago, IL 60607, USA.}
\email{masur@math.uchicago.edu.}
\urladdr{http://www.math.uchicago.edu/~masur/}

\author{Carlos Matheus}
\address{Carlos Matheus: Universit\'e Paris 13, Sorbonne Paris Cit\'e, LAGA, CNRS (UMR 7539), F-93439, Villetaneuse, France.}
\email{matheus@impa.br.}
\urladdr{http://www.impa.br/$\sim$cmateus}

\author{Amie Wilkinson}
\address{Amie Wilkinson: Department of Mathematics, University of Chicago, 5734 S.
University, Chicago, IL 60607, USA.}
\email{wilkinso@math.uchicago.edu.}
\urladdr{http://math.uchicago.edu/~wilkinso/}

\date{\today}

\begin{abstract}
We show that the rate of mixing of the Weil-Petersson flow on non-exceptional (higher dimensional) moduli spaces of Riemann surfaces is at most polynomial. 
\end{abstract}
\maketitle

\setcounter{tocdepth}{1}
\tableofcontents

\section{Introduction}\label{s.introduction}

The Weil-Petersson flow $\mathcal{WP}_t$ is the geodesic flow associated to the Weil-Petersson metric $g_{WP}$ on the moduli spaces $\mathcal{M}(S)=\mathcal{M}_{g,n}$ of Riemann surfaces $S$ of genus $g\geq 0$ and $n\geq 0$ punctures with $3g-3+n\geq 1$.

The Weil-Petersson (WP) metric is a natural object from the point of view of hyperbolic geometry of Riemann surfaces: for example, the WP metric has a simple expression in terms of the Fenchel-Nielsen coordinates (see Wolpert \cite{Wolpert1983}), the growth of hyperbolic lengths of simple closed curves in Riemann surfaces is related to the volumes of moduli spaces equipped with the WP metric (see Mirzakhani \cite{Mirzakhani}), and the thermodynamical invariants of the geodesics flows on Riemann surfaces allows  the  calculation of  the Weil-Petersson metric (see McMullen \cite{McMullen}).

The Weil-Petersson flow is a dynamically interesting object: indeed, the WP metric is an incomplete negatively curved, Riemannian (and, in fact, K\"ahler) metric; in particular, the WP flow is an example of a singular hyperbolic flow.

The dynamics of the WP flow has the following properties:

\begin{itemize}
\item Wolpert \cite{Wolpert2003} proved that the WP flow $\mathcal{WP}_t$ is defined for all times $t\in\mathbb{R}$ for almost every initial data with respect to the finite Liouville measure $\mu$ induced by the WP metric;
\item Brock, Masur and Minsky \cite{BMM} showed that the WP flow is transitive, its periodic orbits are dense and its topological entropy is infinite;
\item Burns, Masur and Wilkinson \cite{BMW} proved that the WP flow is ergodic and mixing (and even Bernoulli) non-uniformly hyperbolic flow whose metric entropy is positive and finite.
\end{itemize}

Intuitively, the ergodicity and mixing properties obtained by Burns, Masur and Wilkinson say that almost every WP flow orbit becomes equidistributed and all regions of the phase space are mixed together after a long time passes.

However, the arguments employed in \cite{BMW} -- notably involving Wolpert's curvature estimates for the WP metric, McMullen's primitive of the WP symplectic form, Katok and Strelcyn's work on Pesin theory for systems with singularities, and Hopf's argument for ergodicity of geodesic flows -- do not provide rates of mixing for the WP flow. In other words, even though the WP flow mixes together all regions of the phase space after a certain time, it is not clear how long it takes for such a mixture to occur. 

\subsection{Main results} \label{ss.results}

The goal of this paper is to prove that the rate of mixing of the WP flow is at most polynomial when the moduli space $\mathcal{M}_{g,n}$ is \emph{non-exceptional}, i.e., $3g-3+n>1$.

\begin{theorem}\label{t.BMMW-A} Consider the WP flow $\mathcal{WP}_t$ on the unit cotangent bundle of $\mathcal{M}_{g,n}$. Suppose that $3g-3+n>1$. Then, the rate of mixing of $\mathcal{WP}_t$ is at most polynomial: if the decay of correlations has the form
\begin{equation}\label{e.polymix}
C_t(f,g):=\left|\int f\cdot g\circ\mathcal{WP}_t - \int f \int g\right|\leq c\, t^{-\gamma}\|f\|_{C^1}\|g\|_{C^1} \quad\forall\, t\geq 1
\end{equation}
for some constants $c>0$ and $\gamma>0$, then $\gamma\leq 10$.
\end{theorem}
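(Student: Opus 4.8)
\medskip

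\noindent\textbf{Proof proposal.} The plan is to locate, in a neighborhood of a suitable boundary stratum of the Deligne--Mumford-type completion of $\mathcal{M}_{g,n}$, a positive Liouville-measure family of Weil--Petersson geodesics that remain extremely close to that stratum for a long but only \emph{polynomially} long time, and then to convert this ``polynomial trapping'' into a polynomial lower bound for $C_t$ via a Ratner-type construction of $C^1$ test functions.

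First I would use $3g-3+n>1$ to choose an essential simple closed curve $\alpha$ on $S$ whose complement $Y=S\setminus\alpha$ has a component of complexity $\geq 1$: completing $\alpha$ to a pants decomposition $\alpha=\alpha_1,\dots,\alpha_k$ with $k=3g-3+n\geq 2$, the remaining curves $\alpha_2,\dots,\alpha_k$ form a pants decomposition of $Y$, so some component of $Y$ is not a thrice-punctured sphere. Then the boundary stratum obtained by pinching $\alpha$ is the positive-dimensional moduli space $\mathcal{M}(Y)$; this is exactly the step that fails when $3g-3+n=1$, where $Y$ is a union of thrice-punctured spheres and the stratum is a point. On a neighborhood $\{\ell_\alpha<\varepsilon_0\}$ of the stratum, Wolpert's asymptotic expansion of the Weil--Petersson metric shows that, in Fenchel--Nielsen coordinates $(\ell_\alpha,\tau_\alpha)$ for $\alpha$ together with coordinates on $\mathcal{M}(Y)$, and after a change of variable $r_\alpha$ comparable to $\ell_\alpha^{1/2}$, the metric is of the form
\[
 g_{WP}\;=\;\big(dr_\alpha^2+r_\alpha^6\,d\tau_\alpha^2\big)\;+\;g_{WP,Y}\;+\;h ,
\]
where $g_{WP,Y}$ is the (pulled-back) Weil--Petersson metric of $\mathcal{M}(Y)$ and the error $h$, together with a controlled number of its derivatives, vanishes to a definite positive order as $r_\alpha\to 0$. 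To leading order $g_{WP}$ is thus the Riemannian product of the model cusp $dr_\alpha^2+r_\alpha^6\,d\tau_\alpha^2$ with the lower-dimensional Weil--Petersson space, and crucially the $\alpha$-block above is not affected by any further degeneration taking place inside $\mathcal{M}(Y)$.

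The dynamical heart concerns geodesics that move \emph{slowly} in the pinching direction. In the exact product the pinching energy $E_\alpha:=\dot r_\alpha^2+r_\alpha^6\dot\tau_\alpha^2$ is conserved, and a unit-speed geodesic with $E_\alpha=\epsilon^2$ and small angular momentum $L:=r_\alpha^6\dot\tau_\alpha$ is a speed-$\epsilon$ geodesic of the model cusp: it descends from $r_\alpha\approx r_0$ to its turning point $r_\alpha=(L/\epsilon)^{1/3}$ and climbs back, spending time $\asymp r_0/\epsilon$ in the region $R:=\{r_\alpha<r_0\}$ while wandering freely in $\mathcal{M}(Y)$. (For the \emph{pure} two-dimensional model cusp such an excursion lasts only $O(1)$, which is why the exceptional cases behave differently; it is the room supplied by $\mathcal{M}(Y)$ that makes long, slow excursions possible.) The main obstacle --- the step I expect to require the most care --- is to show this survives reinstating $h$: taking the scale $r_0=r_0(\epsilon)$ to be a small power of $\epsilon$ dictated by the vanishing order of $h$, one must establish an adiabatic-invariance estimate showing that the total variation of $E_\alpha$ over the excursion (of length $\asymp r_0/\epsilon$) is $\ll\epsilon^2$, so that $E_\alpha$ stays comparable to $\epsilon^2$ and the orbit genuinely remains in $R$ for time $\asymp r_0/\epsilon\to\infty$; this uses both the size and the derivative bounds on $h$ and requires tracking the exponents with care. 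A computation with the volume density $\asymp r_\alpha^3\,dr_\alpha\,d\tau_\alpha$ near the stratum and the cost $\asymp\epsilon^2$, in the spherical measure on unit tangent vectors, of having $E_\alpha\leq\epsilon^2$ then shows that the cross-section $\Sigma_T$ of orbits just entering an excursion of length $>T$ has transverse measure $\asymp T^{-\beta}$ for an explicit $\beta>1$ (with $\epsilon$ a fixed negative power of $T$): near the stratum the WP flow is a suspension whose return time has a polynomial tail.

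Finally I would run the standard ``no rapid mixing from a fat tower'' argument. Put $\widetilde B_T:=\bigcup_{0\leq s\leq T/2}\mathcal{WP}_s(\Sigma_T)$, so $\mu(\widetilde B_T)\asymp T^{1-\beta}$; since every orbit through $\Sigma_T$ stays in $R$ for time $>T$, the flow-out does not self-intersect and, for $0\leq t\leq T/2$, the sets $\widetilde B_T$ and $\mathcal{WP}_t(\widetilde B_T)$ overlap in measure $\gtrsim (T/2-t)\,T^{-\beta}$; hence the time-$T/4$ self-correlation of $\mathbf 1_{\widetilde B_T}$ is $\gtrsim T^{1-\beta}$ once $T^{1-\beta}\to 0$. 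Replacing $\mathbf 1_{\widetilde B_T}$ by a $C^1$ bump inflates its $C^1$-norm by a factor $T^{c}$ for an explicit $c$ (the box has length $\asymp T$ along the flow but width a negative power of $T$ in the directions where $E_\alpha$ is pinched and $r_\alpha$ is localized), so plugging $f=g$ equal to this smoothed indicator into \eqref{e.polymix} at $t=T/4$ gives $c'(T/4)^{-\gamma}T^{2c}\geq C_{T/4}(f,g)\gtrsim T^{1-\beta}$ for all large $T$, forcing $\gamma\leq 2c+\beta-1$. Carrying the exponents from Wolpert's expansion through this bookkeeping yields $2c+\beta-1\leq 10$, hence $\gamma\leq 10$. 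No attempt is made to optimize the constant.
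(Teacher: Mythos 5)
Your overall strategy is the same as the paper's: use $3g-3+n>1$ to get a positive-dimensional stratum, use Wolpert's expansion to find a set of polynomially small (but not too small) measure of unit vectors whose geodesics stay trapped near the stratum for a polynomially long time, and feed smoothed indicators into \eqref{e.polymix}. But there are two genuine gaps. First, the dynamical heart --- what you call the adiabatic-invariance estimate for $E_\alpha$ --- is exactly the step the paper must (and does) prove, and you leave it as an announced obstacle. It cannot be extracted from the $C^0$ quasi-isometric expansion $g_{WP}\sim 4\,dx_\alpha^2+x_\alpha^6\,d\tau_\alpha^2$ alone: one needs Wolpert's estimate on the covariant derivative of $\lambda_\alpha=\mathrm{grad}\,\ell_\alpha^{1/2}$, namely $\nabla_v\lambda_\alpha=\frac{3}{2\pi\ell_\alpha^{1/2}}\langle v,J\lambda_\alpha\rangle J\lambda_\alpha+O(\ell_\alpha^{3/2}\|v\|)$, from which the Clairaut-type relation $r_\alpha'=O(\ell_\alpha^{3/2})$ follows and gives the trapping time $\asymp 1/\varepsilon$ directly (no turning-point analysis or excursion structure is needed; one only needs that $f_\alpha=\ell_\alpha^{1/2}$ cannot grow past $2\varepsilon$ before time $\asymp 1/\varepsilon$, since $|f_\alpha'|\leq r_\alpha$ and $r_\alpha$ stays of order $\varepsilon^2$).

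Second, your final bookkeeping is asserted rather than computed, and as set up it does not give the stated bound. With the self-correlation of a single thin flow-box you pay the large $C^1$-norm \emph{twice}: the trapped set $V_\varepsilon=\{\ell_\alpha^{1/2}\leq\varepsilon,\ r_\alpha\leq\varepsilon^2\}$ has volume $\asymp\varepsilon^8$ and its smoothed indicator has $C^1$-norm $\asymp\varepsilon^{-2}$, so taking $f=g$ equal to that bump at $t\asymp 1/\varepsilon$ yields $\varepsilon^8\lesssim\varepsilon^\gamma\cdot\varepsilon^{-4}$, i.e.\ $\gamma\leq 12$, not $10$. The paper instead correlates $V_\varepsilon$ against a \emph{fixed} compact ball $U$ of definite systole, whose observable has $\int a\asymp 1$ and $\|a\|_{C^1}=O(1)$; since the flow cannot move $V_\varepsilon$ into $T^1U$ by time $1/C_0\varepsilon$, the correlation there equals $(\int a)(\int b_\varepsilon)\gtrsim\varepsilon^8$ exactly, and the norm cost is only $\varepsilon^{-2}$, which is what produces $\gamma\leq 10$. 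So to prove the theorem as stated you should replace the symmetric self-correlation by this asymmetric pairing, and you must actually establish the near-conservation of $r_\alpha$ from Wolpert's connection estimates rather than from the metric expansion.
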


\begin{remark} Of course, the ``precise'' rate of mixing depends on the choice of functional spaces where the test functions live. Nevertheless, as it is explained in Remark \ref{r.poly-mixing-Ck} below, the arguments used in the proof of Theorem \ref{t.BMMW-A} can be adapted to show that, if we replace the $C^1$-norms by $C^{k+\alpha}$-norms, then one has an upper bound $\gamma\leq 2(k+\alpha)+8$ for the polynomial speed of decay of correlations of the WP flow. In other terms, our proof of Theorem \ref{t.BMMW-A} is sufficiently robust to show that the rate of mixing of the WP flow is not exponential (nor super-polynomial) even after ``natural'' modifications of the functional space of test functions.
\end{remark}

Our proof of Theorem \ref{t.BMMW-A} is based on the following geometric features of the WP metric. The volume of a $\varepsilon$-neighborhood of the  boundary of the completion of $\mathcal{M}_{g,n}$ is $\sim\varepsilon^4$, i.e., the cusp at infinity of $\mathcal{M}_{g,n}$ is polynomially large from the point of view of the WP metric.  We henceforth call this boundary the Cauchy boundary. 
Using this  fact, we construct a \emph{non-negligible} set (of volume $\sim\varepsilon^8$) of unit tangent vectors whose associated geodesics stay \emph{almost parallel} to certain parts of the Cauchy boundary of $\mathcal{M}_{g,n}$ for a \emph{non-trivial} fraction of time (of order $\sim 1/\varepsilon$). This implies that the WP flow takes a \emph{polynomially long} time to mix certain non-negligible regions near the Cauchy boundary of $\mathcal{M}_{g,n}$ with any given compact subset of $\mathcal{M}_{g,n}$, and, hence, the rate of mixing is not very fast in the sense that the exponent $\gamma$ appearing in \eqref{e.polymix} is not large (i.e., $\gamma\leq 10$).

Logically, the argument described in the previous paragraph breaks down in the \emph{exceptional} cases of the WP flow on the unit tangent bundle of the moduli spaces $\mathcal{M}_{g,n}$ with $3g-3+n=1$ because the Cauchy boundary of $\mathcal{M}_{g,n}$ with respect to the WP metric reduces to a finite collection of points whenever $3g-3+n=1$. Therefore, there are no WP geodesics travelling almost parallel to the boundary in the exceptional cases, and one might expect that the WP flow is exponentially mixing in these situations. 

In a forthcoming paper \cite{BMMWii}, we will show that this is indeed the case: the WP flow on the unit tangent bundle of $\mathcal{M}_{g,n}$ is exponentially mixing when $3g-3+n=1$.

\subsection{Questions and comments}  \label{comments}

Our Theorem \ref{t.BMMW-A} motivates the following question: is the WP flow on the unit tangent bundle of $\mathcal{M}_{g,n}$ \emph{genuinely} polynomially mixing when $3g-3+n>1$?

We contrast with the case of the Teichm\"uller flow where it is known that it is exponentially mixing \cite{AGY}.

We believe that this is a subtle question related to the geometry of the WP metric. Indeed, from the heuristic point of view, the \emph{exact} rate of mixing for the WP flow depends on how fast most WP flow orbits near the Cauchy boundary of $\mathcal{M}_{g,n}$ come back to compact parts of $\mathcal{M}_{g,n}$ (compare with our proof of Theorem \ref{t.BMMW-A}), and this is somewhat related to how close to zero the sectional curvatures of the WP metric can be.  Unfortunately, the best available expansion formulas (due to Wolpert) for the curvatures of WP metric near the Cauchy boundary of $\mathcal{M}_{g,n}$ do not help us here: for example there are sectional curvatures that are at least as close to zero as   
something of order $-e^{-\frac{1}{\epsilon}}$, where $\epsilon$ is the distance to the boundary of the  completion.
What then would be important is how large a measure set of planes have these curvatures close to $0$. 


We also mention a recent result of Hamenstadt \cite{H} connecting the  Teichm\"uller and Weil-Petersson flows.   She shows that  there is
a Borel subset
$E$ of the unit tangent bundle $Q$ of quadratic differentials which is invariant under the Teichm\"uller  flow and is 
of full measure
for every invariant Borel probability measure, and there is
a measurable map
$\lambda:E\to T^1\mathcal{T}_{g,n}$ to the unit tangent bundle for the Weil-Petersson metric 
 which conjugates  the Teichm\"uller flow 
into the Weil-Petersson flow. 
This conjugacy induces a continuous injection of the space of all 
Teichm\"uller invariant 
Borel probability measures on
$Q$ 
 into the space of all
WP 
invariant Borel
probability measures on 
$T^1\mathcal{T}_{g,n}$. Its image contains the Lebesgue Liouville measure.

\section{Proof of Theorem \ref{t.BMMW-A}} \label{sec:Thm:BMMW:A}

The basic references for the next subsection are Wolpert's papers \cite{Wolpert2008} and \cite{Wolpert2009} (see also Section 4 of \cite{BMW}).

\subsection{Background and notations}

Let $\mathcal{T}_{g,n}$ be the Teichm\"uller space of Riemann surfaces $M$ of genus $g\geq 0$ with $n\geq 0$ punctures. Recall that the fiber $T^{\ast}_M\mathcal{T}_{g,n}$ of the cotangent bundle $T^{\ast}\mathcal{T}_{g,n}$ of $\mathcal{T}_{g,n}$ can be identified with the space of integrable meromorphic quadratic differentials $\phi$ on $M$. In this context, the \emph{Weil-Petersson metric} is $$\langle\phi,\psi\rangle_{WP}=\int_M \phi\overline{\psi}(ds^2)^{-1}$$
where $ds^2$ is the hyperbolic metric associated to the underlying complex structure of $M$. The Weil-Petersson metric is $\Gamma_{g,n}$-invariant where $\Gamma_{g,n}$ is the mapping class group. In particular, the Weil-Petersson metric induces a metric on the moduli space $\mathcal{M}_{g,n} = \mathcal{T}_{g,n}/\Gamma_{g,n}$. The geodesic flow $\mathcal{WP}_t$ of the Weil-Petersson metric $g_{WP}:=\langle,\rangle_{WP}$ on the unit tangent bundle $T^1\mathcal{M}_{g,n} := T^1\mathcal{T}_{g,n}/\Gamma_{g,n}$ of the moduli space $\mathcal{M}_{g,n}$ is called \emph{Weil-Petersson flow}.

The Teichm\"uller space $\mathcal{T}_{g,n}$ is a complex manifold of dimension $3g-3+n$ homeomorphic to a ball. In terms of the identification of the cotangent bundle $T^*\mathcal{T}_{g,n}$ with the space of integrable meromorphic quadratic differentials, the natural almost complex structure $J$ on $\mathcal{T}_{g,n}$ simply corresponds to the multiplication by $i$. Also, a concrete way of constructing a homeomorphism between $\mathcal{T}_{g,n}$ and a ball of real dimension $6g-6+n$ uses the \emph{Fenchel-Nielsen coordinates}  
$$(\ell_{\alpha},\tau_{\alpha})_{\alpha\in P}:\mathcal{T}_{g,n}\to (\mathbb{R}_+\times\mathbb{R})^{3g-3+n}$$ 
obtained by fixing a pants decomposition $P=\{\alpha_1,\dots,\alpha_{3g-3+n}\}$ (maximal collection of non-peripheral, homotopically non-trivial simple closed curves with disjoint representatives) of $M$, and letting $\ell_{\alpha}(M)$ be the hyperbolic length of the unique geodesic of $M$ in the homotopy class of $\alpha$ and $\tau_{\alpha}(M)$ be a twist parameter. 

The WP metric is incomplete: it is possible to escape $\mathcal{T}_{g,n}$ in finite time by following a WP geodesic along which a simple closed curve $\alpha$ of hyperbolic length $\ell_{\alpha}$ shrinks into a point after time of order $\ell_{\alpha}^{1/2}$. The metric completion of $\mathcal{T}_{g,n}$ with respect to the WP metric is the \emph{augmented Teichm\"uller space} $\overline{\mathcal{T}}_{g,n}$ obtained by adjoining lower-dimensional Teichm\"uller spaces of noded Riemann surfaces. Alternatively, the Cauchy boundary $\partial \mathcal{T}_{g,n}$ of the Teichm\"uller space $\mathcal{T}_{g,n}$ equipped with the WP metric is 
$$\partial \mathcal{T}_{g,n} = \bigcup\limits_{\sigma}\mathcal{T}_{\sigma}$$
where $\mathcal{T}_{\sigma}$ is a Teichm\"uller space of noded Riemann surfaces where a certain collection $\sigma$ of non-peripheral, homotopically non-trivial, simple closed curves with disjoint representatives are pinched. 

The mapping class group $\Gamma_{g,n}$ acts on the augmented Teichm\"uller space $\overline{\mathcal{T}}_{g,n}$ and the quotient 
$$\overline{\mathcal{M}}_{g,n}:=\overline{\mathcal{T}}_{g,n}/\Gamma_{g,n}$$ 
is the so-called \emph{Deligne-Mumford compactification} of the moduli space $\mathcal{M}_{g,n}$. 

The picture below illustrates, for a given $\varepsilon>0$, the portion of $\mathcal{M}_{g,n}$ (with $3g-3+n>1$) consisting of $M\in\mathcal{M}_{g,n}$ such that a non-separating simple closed curve $\alpha$ has hyperbolic length $\ell_{\alpha}(M)\leq (2\varepsilon)^2$. 

\begin{figure}[htb!]
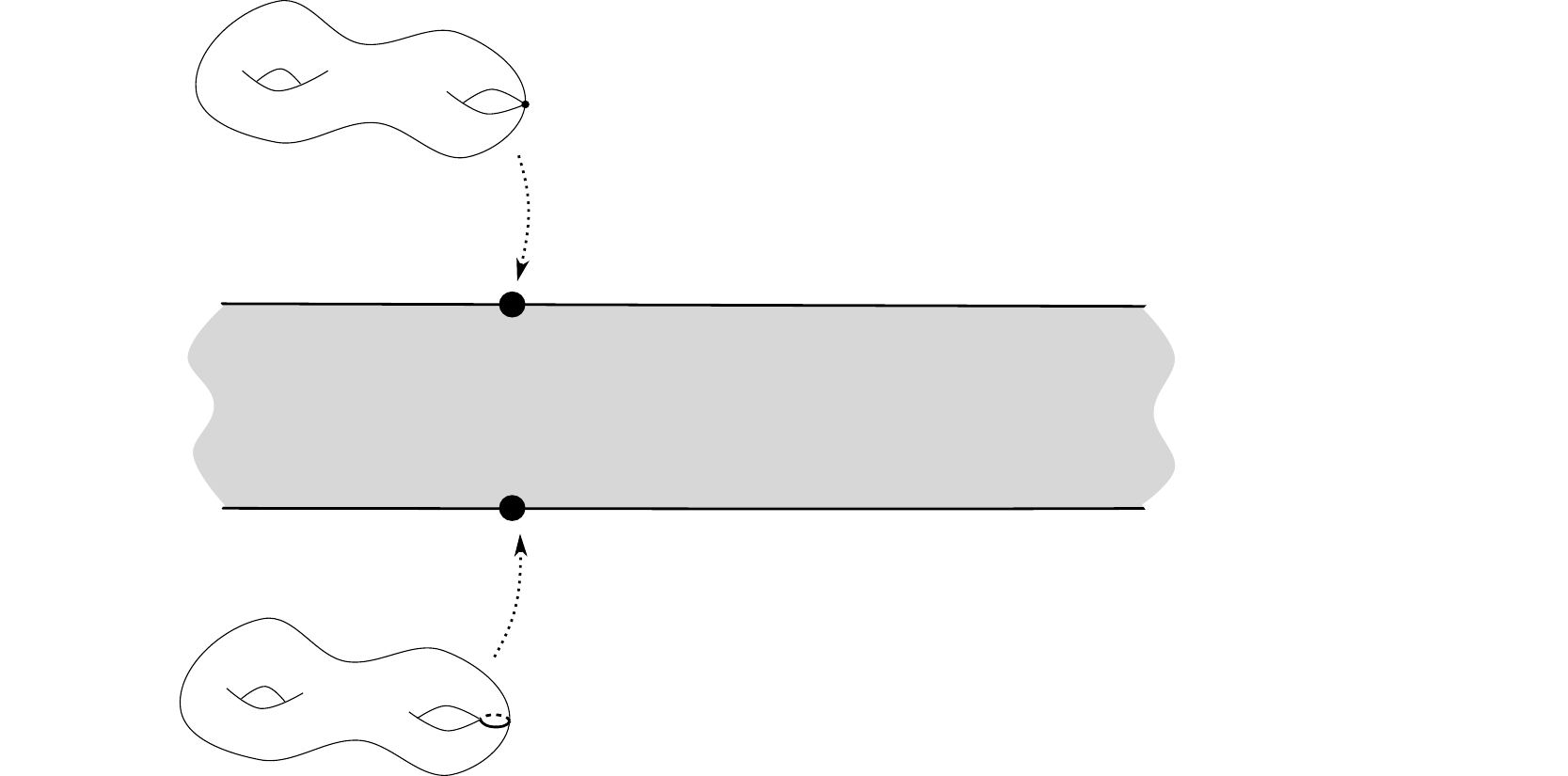
\caption{A portion of a neighborhood of $\partial\mathcal{M}_{g,n}$ when $3g-3+n>1$.}\label{f.BMMW1}
\end{figure}

\begin{remark}\label{r.non-exceptional-boundary} The stratum $\mathcal{T}_{\alpha}/\Gamma_{g,n}$ in Figure \ref{f.BMMW1} is non-trivial because $3g-3+n>1$. Indeed, by pinching $\alpha$ as above and by disconnecting the associated node, we get Riemann surfaces with genus $g-1$ and $n+2$ punctures whose moduli spaces are isomorphic to $\mathcal{T}_{\alpha}/\Gamma_{g,n}$. Thus, the complex orbifold $\mathcal{T}_{\alpha}/\Gamma_{g,n}$ has complex dimension $$3(g-1)-3+(n+2)=3g-3+n-1>0,$$ and, \emph{a fortiori}, it is non-trivial. 

Evidently, this argument breaks down for exceptional ($3g-3+n=1$) moduli spaces $\mathcal{M}_{g,n}$: for example, by pinching a curve $\alpha$ as above in an once-punctured torus and by removing the resulting node, we obtain thrice punctured spheres (whose moduli space is a single point), so that the boundary stratum of $\mathcal{M}_{1,1}$ reduces to a point. 

This difference between the structure of the boundaries of non-exceptional and exceptional moduli spaces partly explains the different rates of mixing of the WP flow on $\mathcal{M}_{g,n}$ depending on whether $3g-3+n>1$ or $3g-3+n=1$ (cf. Subsection \ref{ss.results}).
\end{remark}

The WP metric is a negatively curved K\"ahler metric, i.e., the sectional curvatures of the WP metric $g_{WP}$ are negative and the the symplectic Weil-Petersson $2$-form 
$$\omega_{WP}(v,Jw):=g_{WP}(v,w)$$ 
is closed. Furthermore, Wolpert showed that the symplectic WP form $\omega_{WP}$ admits the following  simple expression
$$\omega_{WP}=\frac{1}{2}\sum\limits_{\alpha\in P} d\ell_{\alpha}\wedge d\tau_{\alpha}$$
in terms of the Fenchel-Nielsen coordinates $(\ell_{\alpha}, \tau_{\alpha})_{\alpha\in P}$.

The geometry of the WP metric near $\partial{M}_{g,n}$ was described by Wolpert in terms of several asymptotic formulas. In the sequel, we recall some of Wolpert's formulas. 

\begin{lemma}\label{l.wp-DM-bdry-nbhd-vol} Denote by $\rho_0(M)$ the systole of $M$ and let 
$$E_{\rho}:=\{M\in\mathcal{M}_{g,n}: \rho_0(M)\leq\rho\}.$$ 
Then, the WP volume of $E_{\rho}$ is 
$$\textrm{vol}(E_{\rho})\simeq\rho^4$$
\end{lemma}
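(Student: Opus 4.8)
The plan is to localise the estimate near the Cauchy boundary $\partial\mathcal{M}_{g,n}$ and reduce it to a one-variable integral in Fenchel--Nielsen coordinates. By the pinching-time estimate recalled above (a WP geodesic along which a simple closed curve $\gamma$ of hyperbolic length $\ell_\gamma$ degenerates reaches $\partial\mathcal{M}_{g,n}$ in time $\simeq\ell_\gamma^{1/2}$), the quantity $\rho_0(M)$ is comparable to $\ell_\gamma(M)^{1/2}$, where $\gamma$ realises the systole of $M$; hence, up to fixed multiplicative constants, $E_\rho$ agrees with the union, over all simple closed curves $\gamma$, of the loci $U_\delta(\gamma):=\{M\in\mathcal{M}_{g,n}:\ell_\gamma(M)\leq\delta\}$ with $\delta\simeq\rho^2$. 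Since $\Gamma_{g,n}$ acts on the set of isotopy classes of simple closed curves with finitely many orbits, it suffices to prove $\textrm{vol}_{WP}(U_\delta(\gamma))\simeq\delta^2$ for each topological type of $\gamma$ and all small $\delta$; taking $\delta\simeq\rho^2$ then produces the exponent $4$.

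To estimate $\textrm{vol}_{WP}(U_\delta(\gamma))$, I would extend $\gamma$ to a pants decomposition $P\ni\gamma$ and invoke Wolpert's formula $\omega_{WP}=\frac{1}{2}\sum_{\alpha\in P}d\ell_\alpha\wedge d\tau_\alpha$, which gives the WP volume form $2^{-(3g-3+n)}\prod_{\alpha\in P}d\ell_\alpha\,d\tau_\alpha$ in the Fenchel--Nielsen coordinates $(\ell_\alpha,\tau_\alpha)_{\alpha\in P}$. Passing from $\mathcal{T}_{g,n}$ to $\mathcal{M}_{g,n}$, I would use a fundamental domain for $\Gamma_{g,n}$ adapted to the slice $\{\ell_\gamma\leq\delta\}$: the Dehn twist about $\gamma$ fixes every Fenchel--Nielsen coordinate except $\tau_\gamma$, which it translates by $\ell_\gamma$, so in the fundamental domain $\tau_\gamma$ sweeps an interval of length $\simeq\ell_\gamma$ (a bounded residual symmetry may shrink this by a fixed factor), while the remaining coordinates $(\ell_\alpha,\tau_\alpha)_{\alpha\neq\gamma}$ vary, up to bounded multiplicative error, over the moduli space $\mathcal{M}(S_\gamma;\ell_\gamma)$ of the (possibly disconnected) surface $S_\gamma$ obtained by cutting $M$ along $\gamma$, its two new boundary circles having length $\ell_\gamma$. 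By Mirzakhani's volume polynomials \cite{Mirzakhani}, $V(\ell_\gamma):=\textrm{vol}_{WP}(\mathcal{M}(S_\gamma;\ell_\gamma))$ is a polynomial in $\ell_\gamma^2$; it is thus continuous in $\ell_\gamma$ and tends as $\ell_\gamma\to0^+$ to $\textrm{vol}_{WP}(\mathcal{M}(S_\gamma;0))$, which is finite (finiteness of WP volumes of moduli spaces being due to Wolpert) and positive. Integrating the volume form over this fundamental domain yields
$$\textrm{vol}_{WP}(U_\delta(\gamma))\simeq\int_0^\delta\ell_\gamma\,V(\ell_\gamma)\,d\ell_\gamma\simeq\delta^2$$
for $\delta$ small, as needed.

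It then remains to assemble the two inequalities. For the upper bound, $E_\rho$ is contained in a union of finitely many sets $U_{c\rho^2}(\gamma)$, one per topological type of $\gamma$, so $\textrm{vol}_{WP}(E_\rho)\leq\sum_{[\gamma]}\textrm{vol}_{WP}(U_{c\rho^2}(\gamma))\lesssim\rho^4$; inside each $U_{c\rho^2}(\gamma)$ the sublocus where a second disjoint curve is also short contributes a product of at least two factors $\lesssim\rho^4$, hence $O(\rho^8)$, which is negligible. For the lower bound, fix a single non-separating $\gamma$; then $U_{c'\rho^2}(\gamma)\subseteq E_\rho$ for a suitable $c'>0$, and the displayed estimate gives $\textrm{vol}_{WP}(E_\rho)\geq\textrm{vol}_{WP}(U_{c'\rho^2}(\gamma))\gtrsim\rho^4$.

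I expect the main obstacle to be the fundamental-domain bookkeeping in the second step: one must check that, uniformly as $\ell_\gamma\to0$, the twist $\tau_\gamma$ genuinely fills an interval of length comparable to $\ell_\gamma$ (so the lower bound is not lost) and that the complementary coordinates range over a region whose WP volume is bounded above and below independently of $\ell_\gamma$; this is exactly where Wolpert's description of the WP geometry near $\partial\mathcal{M}_{g,n}$ and the continuity of Mirzakhani's volume functions enter. Handling separating curves, the drop in genus and the extra punctures created by cutting along $\gamma$, and orbifold or involution factors (such as the symmetry exchanging the two boundary circles of $S_\gamma$ when $\gamma$ is non-separating) only affects the implied constants, not the exponent $4$.
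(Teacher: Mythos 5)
Your argument is correct in substance but follows a genuinely different route from the paper. The paper's proof is purely local and metric: it quotes Wolpert's expansion $g_{WP}\sim\sum_{\alpha\in\sigma}(4\,dx_\alpha^2+x_\alpha^6\,d\tau_\alpha^2)$ with $x_\alpha=\ell_\alpha^{1/2}/\sqrt{2\pi^2}$ near a boundary stratum $\mathcal{T}_\sigma$, reads off that the volume element is $\sim\prod_{\alpha\in\sigma}x_\alpha^3$ and that the distance to $\mathcal{T}_\sigma$ is $\simeq\min_\alpha x_\alpha$, and integrates $x^3\,dx$ to get $\rho^4$ (deeper strata contribute $\rho^{4|\sigma|}$, which is lower order -- the analogue of your ``two short curves give $O(\rho^8)$'' remark). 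You instead work with the exact Fenchel--Nielsen volume form $\prod\frac12\,d\ell_\alpha\,d\tau_\alpha$, a Dehn-twist fundamental domain in which $\tau_\gamma$ sweeps an interval of length $\ell_\gamma$, and Mirzakhani's integration formula together with positivity and continuity of the volume polynomial $V(\ell)$ of the cut surface, arriving at $\int_0^\delta\ell\,V(\ell)\,d\ell\simeq\delta^2$. Your route needs more machinery (Mirzakhani's unfolding, finiteness of WP volumes of the cut pieces) but gives exact constants and handles cleanly the multiplicity coming from several short curves; the paper's route is two lines once Wolpert's model metric is accepted. Both are valid.

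One point needs care: normalization. If $\rho_0(M)$ is literally the systole -- the hyperbolic length of the shortest closed geodesic -- then $E_\rho=\bigcup_\gamma\{\ell_\gamma\leq\rho\}$, and your own computation gives $\textrm{vol}(E_\rho)\simeq\rho^2$, not $\rho^4$. The exponent $4$ belongs to the set $\{\min_\gamma\ell_\gamma^{1/2}\leq\rho\}$, i.e.\ the $\rho$-neighbourhood of the Cauchy boundary, and your substitution $\delta\simeq\rho^2$ -- justified by the assertion that $\rho_0(M)$ is comparable to $\ell_\gamma(M)^{1/2}$ -- silently switches to that set; that assertion is not the standard definition of the systole, and the pinching-time estimate you invoke concerns WP distance, not hyperbolic length. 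The paper's own proof makes exactly the same identification (it integrates over distance at most $\rho$ from the stratum), and the lemma is used later only in that form, so your reading matches the intended statement; but you should say explicitly which quantity you are calling $\rho_0$ rather than deriving the square from the pinching-time estimate.
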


\begin{proof} Wolpert proved (in page 284 of \cite{Wolpert2008}) that $g_{WP}$ has asymptotic expansion
$$g_{WP}\sim \sum\limits_{\alpha\in\sigma} (4\, dx_{\alpha}^2 + x_{\alpha}^6 d\tau_{\alpha}^2)$$
near $\mathcal{T}_{\sigma}$, where $x_{\alpha}:=\ell_{\alpha}^{1/2}/\sqrt{2\pi^2}$.

This gives that the volume element $\sqrt{\det(g_{WP})}$ of the Weil-Petersson metric near $\mathcal{T}_{\sigma}$ is $\sim\prod\limits_{\alpha\in\sigma} x_{\alpha}^3$. Furthermore, this asymptotic expansion of $g_{WP}$ also says that the distance between $M$ and $\mathcal{T}_{\sigma}$ is comparable to $\min_{\alpha\in\sigma} x_{\alpha}(M)$. By putting these two facts together, we see that
$$\textrm{vol}(E_{\rho})\simeq \rho^4$$
This proves the lemma.
\end{proof}

Let $\mathcal{T}_{\sigma}$ be a boundary stratum of the augmented Teichm\"uller space $\overline{\mathcal{T}}_{g,n}$. We consider collections $\chi$ of simple closed curves such that each $\beta\in\chi$ is disjoint from all $\alpha\in\sigma$.  We let $\mathcal{B}$ be the set  of  $(\sigma,\chi)$. 

An element $(\sigma,\chi)\in\mathcal{B}$ is a \emph{combined length basis} at a point $M\in\mathcal{T}_{g,n}$ whenever the set of tangent vectors
$$\{\lambda_{\alpha}(M), J\lambda_{\alpha}(M), \textrm{grad}\,\ell_{\beta}(M)\}_{\alpha\in\sigma, \beta\in\chi}$$
is a basis of $T_M\mathcal{T}$, where $\ell_{\gamma}$ is the length parameter in the Fenchel-Nielsen coordinates, $J$ is the usual complex structure of $\mathcal{T}_{g,n}$, and $\lambda_{\alpha}:=\textrm{grad}\, \ell_{\alpha}^{1/2}$.

This notion can be extended to $\mathcal{T}_{\sigma}$ as follows. We say a collection $\chi$ is a \emph{relative basis} at a point $M_{\sigma}\in\mathcal{T}_{\sigma}$ whenever $(\sigma,\chi)\in\mathcal{B}$ and the length parameters $\{\ell_{\beta}\}_{\beta\in\chi}$ is a \emph{local} system of coordinates for $\mathcal{T}_{\sigma}$ near $X_{\sigma}$. In other words, $\chi$ is a relative basis at $X_{\sigma}\in\mathcal{T}_{\sigma}$ if and only if $\{\textrm{grad} \ell_{\beta}\}_{\beta\in\chi}$ is a basis of $T_{X_{\sigma}}\mathcal{T}_{\sigma}$.

In this setting, Wolpert proved the following results:

\begin{theorem}[Wolpert \cite{Wolpert2008}]\label{t.Wolpert2008} For any point $M_{\sigma}\in\mathcal{T}_{\sigma}$, there exists a relative length basis $\chi$. Furthermore, the WP metric $\langle.,.\rangle_{WP}$ can be written as
$$\langle.,.\rangle_{WP}\sim \sum\limits_{\alpha\in\sigma}\left((d\ell_{\alpha}^{1/2})^2+(d\ell_{\alpha}^{1/2}\circ J)^2\right) + \sum\limits_{\beta\in\chi}(d\ell_{\beta})^2$$
where the implied comparison constant is uniform in a neighborhood $U\subset\overline{\mathcal{T}}$ of $X_{\sigma}$.

In particular, there exists a neighborhood $V\subset\overline{\mathcal{T}}$ of $X_{\sigma}$ such that $(\sigma,\chi)$ is a combined length basis at any $X\in V\cap\mathcal{T}$.
\end{theorem}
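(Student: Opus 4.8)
The plan is to establish the asymptotic expansion of the Weil-Petersson metric near a boundary stratum $\mathcal{T}_\sigma$ by combining Wolpert's known asymptotics for the ``pinching'' directions $\alpha\in\sigma$ with a continuity/nondegeneracy argument for the ``transverse'' directions $\beta\in\chi$. First I would recall Wolpert's basic asymptotic formulas from \cite{Wolpert2008}: near $\mathcal{T}_\sigma$ one has the expansions $\langle\lambda_\alpha,\lambda_{\alpha'}\rangle_{WP}\sim\frac{1}{2\pi}\delta_{\alpha\alpha'}$ (after suitable normalization), $\langle\lambda_\alpha,J\lambda_{\alpha'}\rangle_{WP}\to 0$, and more generally the Weil-Petersson pairings of the gradients $\lambda_\alpha=\operatorname{grad}\ell_\alpha^{1/2}$ with each other and with the $\operatorname{grad}\ell_\beta$ converge, as $M\to X_\sigma$, to the corresponding pairings on $\mathcal{T}_\sigma$ (where the $\lambda_\alpha$, $J\lambda_\alpha$ directions have degenerated). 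The point is that the clustering/asymptotic-orthogonality of the short-curve directions $\{\lambda_\alpha, J\lambda_\alpha\}_{\alpha\in\sigma}$ is exactly Wolpert's refinement of the Fenchel-Nielsen expansion, while on the complementary directions the metric limits to the Weil-Petersson metric of the lower-dimensional stratum $\mathcal{T}_\sigma$.

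Next I would address existence of the relative basis $\chi$. By definition we need $\{\operatorname{grad}\ell_\beta\}_{\beta\in\chi}$ to be a basis of $T_{X_\sigma}\mathcal{T}_\sigma$. Since the length functions $\ell_\beta$ of simple closed curves disjoint from $\sigma$ descend to functions on $\mathcal{T}_\sigma$, and since finitely many such length functions suffice to give local coordinates on any Teichm\"uller space (this is a standard fact about Fenchel–Nielsen-type length coordinates, used e.g. in Wolpert's work), one can choose $\chi$ with $\beta$ disjoint from all $\alpha\in\sigma$ so that the differentials $d\ell_\beta$ are linearly independent at $X_\sigma$; equivalently the gradients span $T_{X_\sigma}\mathcal{T}_\sigma$. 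This gives $(\sigma,\chi)\in\mathcal{B}$ and makes $\chi$ a relative basis at $X_\sigma$.

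With these ingredients, the displayed comparison follows: write an arbitrary tangent vector at $M$ near $X_\sigma$ in terms of the frame $\{\lambda_\alpha, J\lambda_\alpha, \operatorname{grad}\ell_\beta\}$, and observe that the Gram matrix of this frame with respect to $\langle\cdot,\cdot\rangle_{WP}$ converges (uniformly on a small neighborhood $U$) to a block matrix: the $\sigma$-block tends to a fixed positive-definite matrix (essentially a multiple of the identity plus vanishing off-diagonal $J$-terms) by Wolpert's pinching expansion, and the $\chi$-block tends to the (positive-definite) Gram matrix of $\{\operatorname{grad}\ell_\beta\}$ on $\mathcal{T}_\sigma$, with the cross-terms between $\sigma$- and $\chi$-directions tending to $0$. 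Since the limiting Gram matrix is positive definite, it stays uniformly bounded above and below on a possibly smaller neighborhood, which is precisely the asserted uniform comparability $\langle\cdot,\cdot\rangle_{WP}\sim\sum_{\alpha\in\sigma}((d\ell_\alpha^{1/2})^2+(d\ell_\alpha^{1/2}\circ J)^2)+\sum_{\beta\in\chi}(d\ell_\beta)^2$. The ``in particular'' statement is then immediate: uniform comparability of the WP metric to a fixed quadratic form built from $\{\ell_\alpha^{1/2},\ell_\alpha^{1/2}\circ J,\ell_\beta\}$ forces the frame $\{\lambda_\alpha, J\lambda_\alpha, \operatorname{grad}\ell_\beta\}$ to remain a basis of $T_X\mathcal{T}$ for all $X$ in a neighborhood $V\subset\overline{\mathcal{T}}$ of $X_\sigma$, i.e., $(\sigma,\chi)$ is a combined length basis on $V\cap\mathcal{T}$.

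The main obstacle, and the place where one genuinely needs Wolpert's hard analysis rather than soft arguments, is controlling the off-diagonal behavior of the Gram matrix near the stratum — in particular showing that the pairings $\langle\lambda_\alpha, J\lambda_{\alpha'}\rangle_{WP}$ and the cross pairings $\langle\lambda_\alpha,\operatorname{grad}\ell_\beta\rangle_{WP}$ genuinely tend to $0$, and that $\langle\lambda_\alpha,\lambda_{\alpha'}\rangle_{WP}$ has a definite nonzero limit, uniformly. These are exactly the content of Wolpert's asymptotic expansions in \cite{Wolpert2008}, so the proof amounts to quoting those expansions and assembling them into the block-matrix convergence described above; the remaining linear-algebra step (positive-definite limit implies uniform two-sided bound on a neighborhood) is routine.
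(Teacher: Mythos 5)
There is nothing in the paper to compare your argument against: Theorem~\ref{t.Wolpert2008} is stated as an attributed result of Wolpert, imported from \cite{Wolpert2008} without proof, exactly as Theorem~\ref{t.Wolpert-expansions} is. So the honest assessment is that your ``proof'' and the paper's treatment ultimately rest on the same black box, namely Wolpert's asymptotic expansions for the pairings $\langle\lambda_{\alpha},\lambda_{\alpha'}\rangle_{WP}$, $\langle\lambda_{\alpha},J\lambda_{\alpha'}\rangle_{WP}$ and the cross-pairings with $\textrm{grad}\,\ell_{\beta}$; you are explicit that these are being quoted rather than derived, and that is the only place the real analysis lives.

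As a reconstruction of how those expansions assemble into the stated comparison, your outline is sound and matches the structure of Wolpert's own argument: the Gram matrix of the frame $\{\lambda_{\alpha},J\lambda_{\alpha},\textrm{grad}\,\ell_{\beta}\}$ converges to a block form with the $\sigma$-block asymptotic to $\frac{1}{2\pi}$ times the identity, the $\chi$-block converging to the (positive-definite) Gram matrix of $\{\textrm{grad}\,\ell_{\beta}\}$ on $\mathcal{T}_{\sigma}$, and vanishing cross-terms; positive-definiteness of the limit then gives the two-sided uniform bound on a smaller neighborhood, and the ``in particular'' clause (linear independence of the frame, hence the combined length basis property) is an immediate consequence. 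Your existence argument for $\chi$ -- that finitely many length functions of curves disjoint from $\sigma$ give local coordinates on the stratum -- is the standard fact one needs. One small point worth making precise if you were to write this out: the vanishing of the cross-pairings $\langle\lambda_{\alpha},\textrm{grad}\,\ell_{\beta}\rangle_{WP}$ is not automatic from continuity but follows from Wolpert's quantitative estimate $\langle\textrm{grad}\,\ell_{\alpha}^{1/2},\textrm{grad}\,\ell_{\beta}^{1/2}\rangle_{WP}=\frac{1}{2\pi}\delta_{\alpha\beta}+O((\ell_{\alpha}\ell_{\beta})^{3/2})$ together with the fact that $\ell_{\beta}$ is bounded above and below for $\beta\in\chi$ on the region considered; without that quantitative input the block-diagonalization step would be unjustified.
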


Before writing down the next formula for the WP metric, we need the following notations. Given $\mu$ an arbitrary collection of simple closed curves on a surface $S$ of genus $g$ with $n$ punctures, we define
$$\underline{\ell}_{\mu}(X):=\min\limits_{\alpha\in\mu}\ell_{\alpha}(X)\quad \textrm{and}\quad \overline{\ell}_{\mu}(X):=\max\limits_{\alpha\in\mu}\ell_{\alpha}(X)$$
where $X\in\mathcal{T}_{g,n}$. Also, given a constant $c>1$ and a basis $(\sigma,\chi)\in\mathcal{B}$, we will consider the following region of Teichm\"uller space:
$$\Omega(\sigma,\chi,c):=\{X\in\mathcal{T}: 1/c<\underline{\ell}_{\chi}(X) \textrm{ and } \overline{\ell}_{\sigma\cup\chi}(X)<c\}$$

Wolpert \cite{Wolpert2009} proved several (uniform) estimates (on the regions $\Omega(\sigma,\chi,c)$) for the WP metric (and its sectional curvatures) in terms of the basis $\lambda_{\alpha}=\textrm{grad}\ell_{\alpha}^{1/2}$, $\alpha\in\sigma$ and $\textrm{grad}\ell_{\beta}$, $\beta\in\chi$. For our purposes, we will need just the following asymptotic formula: 

\begin{theorem}[Wolpert \cite{Wolpert2009}]\label{t.Wolpert-expansions} Fix $c>1$. Then, for any $(\sigma,\chi)\in\mathcal{B}$, and any $\alpha,\alpha'\in\sigma$ and $\beta,\beta'\in\chi$, the following estimate hold uniformly on $\Omega(\sigma,\chi,c)$: for any vector $v\in T\Omega(\sigma,\chi,c)$, we have 
$$\left\|\nabla_v \lambda_{\alpha} - \frac{3}{2\pi\ell_{\alpha}^{1/2}}\langle v, J\lambda_{\alpha}\rangle J\lambda_{\alpha}\right\|_{WP} =
O(\ell_{\alpha}^{3/2}\|v\|_{WP})$$
\end{theorem}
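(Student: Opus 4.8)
The plan is to reduce the estimate to an explicit computation of the Levi-Civita connection in the ``surface of revolution'' model that governs the WP metric near the stratum $\mathcal T_\sigma$. I would first localize in a neighborhood $U\subset\overline{\mathcal T}_{g,n}$ of a point $X_\sigma\in\mathcal T_\sigma$ and work in the coordinates attached to a relative/combined length basis $(\sigma,\chi)$, which exists by Theorem~\ref{t.Wolpert2008}. Wolpert's pinching (plumbing-family) expansion of the hyperbolic metric on a surface in which the curves $\alpha\in\sigma$ are being pinched supplies, uniformly on $\Omega(\sigma,\chi,c)$, asymptotic expansions with explicit remainders in powers of the $\ell_\alpha$ ($\alpha\in\sigma$) for each of $\ell_\alpha^{1/2}$, $\lambda_\alpha=\operatorname{grad}\ell_\alpha^{1/2}$, $\operatorname{grad}\ell_\beta$ ($\beta\in\chi$), and the WP pairing. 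Together with Theorem~\ref{t.Wolpert2008} (compare the coarser expansion $g_{WP}\sim\sum_\alpha(4\,dx_\alpha^2+x_\alpha^6\,d\tau_\alpha^2)$ of Lemma~\ref{l.wp-DM-bdry-nbhd-vol}), this shows that in these coordinates the metric splits, up to a controlled error, as a sum over $\alpha\in\sigma$ of blocks of the form $dr_\alpha^2+f_\alpha(r_\alpha)^2\,d\tau_\alpha^2$, with $r_\alpha\asymp\ell_\alpha^{1/2}$ and $f_\alpha(r)\asymp r^3$, together with a ``$\chi$-block'' in the coordinates $\{\ell_\beta\}_{\beta\in\chi}$ on which the metric and its inverse are uniformly bounded; the off-diagonal blocks are subleading.

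The second step is to read off $\nabla_v\lambda_\alpha$ from this model. In a block $dr^2+f(r)^2\,d\tau^2$ the only nonzero Christoffel symbols are $\Gamma^{r}_{\tau\tau}=-ff'$ and $\Gamma^{\tau}_{r\tau}=f'/f$; the former is of size $O(\ell_\alpha^{5/2})$, and only the latter, $f'/f\sim 3/r_\alpha\asymp\ell_\alpha^{-1/2}$, is large, while the radial direction is modelled on a geodesic field. Since, to leading order, $\lambda_\alpha$ points along the radial direction $\partial_{r_\alpha}$ and $J\lambda_\alpha$ along the twist direction, with $\ell_\alpha^{1/2}$ itself an essentially affine function of $r_\alpha$ (so that its radial second derivative is negligible), the covariant derivative $\nabla_v\lambda_\alpha$ equals, up to error, the $\Gamma^{\tau_\alpha}_{r_\alpha\tau_\alpha}$-contribution applied to the twist component $\langle v,J\lambda_\alpha\rangle_{WP}$ of $v$. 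Feeding in $f'/f\sim 3/r_\alpha$, the relation $r_\alpha\asymp\ell_\alpha^{1/2}$, and Wolpert's normalization of $\|\lambda_\alpha\|_{WP}$ then yields precisely the main term displayed in the statement, the exact numerical constant being a matter of tracking the normalizing constants in the expansion; all remaining Christoffel contributions---the $-ff'$ term and every mixed or off-block term---are absorbed into the error.

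The third step is to show that this error is $O(\ell_\alpha^{3/2}\|v\|_{WP})$, uniformly on $\Omega(\sigma,\chi,c)$. Three sources must be estimated. (i) The difference between the true WP metric and the model: by Wolpert's expansions the relevant corrections are of order $O(\ell_\alpha^{2})$, so that after one differentiation---which in the worst, twist, direction costs at most a factor $\lesssim\ell_\alpha^{-1/2}$---they contribute $O(\ell_\alpha^{3/2}\|v\|)$. (ii) The off-diagonal metric entries, between distinct $\alpha,\alpha'\in\sigma$ and between $\sigma$ and $\chi$, and their derivatives: these are controlled by Wolpert's pairing bounds for $\langle\lambda_\alpha,\lambda_{\alpha'}\rangle_{WP}$, $\langle\lambda_\alpha,\operatorname{grad}\ell_\beta\rangle_{WP}$, and the like; here one uses crucially that on $\Omega(\sigma,\chi,c)$ the lengths $\ell_\beta$ ($\beta\in\chi$) stay bounded away from $0$ and $\infty$, so the $\chi$-geometry is uniformly tame and brings in no inverse powers of $\ell_\alpha$. (iii) The deviation of $\lambda_\alpha$ and of $J\lambda_\alpha$ from the model radial and twist fields, again $O(\ell_\alpha^{3/2})$ by the same expansions. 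Adding the main term to these contributions gives the asserted estimate, uniformly over all $(\sigma,\chi)\in\mathcal B$.

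The hard part throughout is the bookkeeping of negative powers of $\ell_\alpha$: the twist block of the metric has size $\sim\ell_\alpha^{3}$, so entries of the inverse metric can be as large as $\sim\ell_\alpha^{-3}$ and the Christoffel symbols could a priori be of size $\ell_\alpha^{-1}$ or worse; the whole point is that, after all cancellations, the \emph{only} surviving negative power of $\ell_\alpha$ in $\nabla_v\lambda_\alpha$ is the single explicit $\ell_\alpha^{-1/2}$ in the displayed term. Making this rigorous and uniform over $\mathcal B$ is exactly where Wolpert's full apparatus of uniform asymptotics near $\partial\mathcal T_{g,n}$ is indispensable: the pinching expansion of the hyperbolic metric, the gradient and Hessian estimates for geodesic-length functions, and the off-diagonal pairing estimates. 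An equivalent packaging of the same computation is to write $\nabla_v\lambda_\alpha=\tfrac{1}{2\ell_\alpha^{1/2}}\operatorname{Hess}(\ell_\alpha)(v)^{\sharp}-\ell_\alpha^{-1/2}\langle v,\lambda_\alpha\rangle_{WP}\,\lambda_\alpha$ and to invoke Wolpert's asymptotic formula for $\operatorname{Hess}(\ell_\alpha)$, whose leading part is a combination of $\langle\,\cdot\,,\lambda_\alpha\rangle_{WP}^{2}$ and $\langle\,\cdot\,,J\lambda_\alpha\rangle_{WP}^{2}$; both routes face the same analytic estimates.
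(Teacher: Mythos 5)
This statement is not proved in the paper at all: it is imported as a black box from Wolpert's article \cite{Wolpert2009} (where it is derived from his uniform expansions of the gradients and Hessians of geodesic-length functions near $\partial\mathcal{T}_{g,n}$), so there is no internal argument to compare your proposal against. Judged on its own terms, your outline correctly identifies the geometric mechanism: in the model metric $4\,dx_\alpha^2+x_\alpha^6\,d\tau_\alpha^2$ the field $\lambda_\alpha$ is (up to a constant) the radial field of a surface of revolution with profile $f(r)\asymp r^3$, the only large Christoffel symbol is $f'/f\sim 3/r$, and it acts exactly on the twist component of $v$ --- which is why the leading term of $\nabla_v\lambda_\alpha$ is a multiple of $\langle v,J\lambda_\alpha\rangle\,J\lambda_\alpha$ with coefficient $\asymp\ell_\alpha^{-1/2}$. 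Your closing identity $\nabla_v\lambda_\alpha=\tfrac{1}{2}\ell_\alpha^{-1/2}\operatorname{Hess}(\ell_\alpha)(v)^{\sharp}-\ell_\alpha^{-1/2}\langle v,\lambda_\alpha\rangle\lambda_\alpha$ is correct and is in fact the closer match to Wolpert's actual route.

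That said, what you have written is a strategy, not a proof, and the gap is precisely where you defer to ``Wolpert's full apparatus.'' Two points are genuinely unjustified as stated. First, in step (i) you pass from a pointwise bound $O(\ell_\alpha^2)$ on the deviation of the metric from the model to a bound on its first derivatives by ``paying a factor $\ell_\alpha^{-1/2}$''; error bounds do not differentiate, and the content of Wolpert's theorem is exactly that the remainders in his expansions are controlled in the $C^1$ (indeed $C^2$) sense, uniformly on $\Omega(\sigma,\chi,c)$. Without an expansion whose remainder is differentiable with stated decay, the Christoffel symbols of the true metric could a priori pick up terms as large as $\ell_\alpha^{-1}$ from the $\ell_\alpha^3$-sized twist block, and the claimed cancellation down to a single $\ell_\alpha^{-1/2}$ would not follow. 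Second, the off-diagonal pairings $\langle\lambda_\alpha,\lambda_{\alpha'}\rangle_{WP}$, $\langle\lambda_\alpha,\operatorname{grad}\ell_\beta\rangle_{WP}$ and their covariant derivatives are invoked but not estimated; these are again theorems of Wolpert, not consequences of the coarse expansion in Lemma~\ref{l.wp-DM-bdry-nbhd-vol} or of Theorem~\ref{t.Wolpert2008}, which only give two-sided comparisons of quadratic forms and cannot see the sign or size of cross terms. So the proposal is an accurate heuristic reconstruction of why the formula has the shape it does, but the quantitative core --- uniform asymptotics with differentiable remainders --- is assumed rather than established, which is why the paper simply cites \cite{Wolpert2009} for it.
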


As it was observed in Lemma 4.15 of \cite{BMW}, this theorem implies the following Clairaut-type relation.
Given $v\in T^1\mathcal{M}_{g,n}$, let us consider the quantity $$r_{\alpha}(v):=\sqrt{\langle v, \lambda_{\alpha} \rangle^2 + \langle v, J\lambda_{\alpha}\rangle^2}$$ and  the complex line $$L=\textrm{span}\{\lambda_{\alpha}, J\lambda_{\alpha}\}.$$ By definition, $r_{\alpha}(v)$ measures the size of the projection of the unit vector $v$ in the complex line $L$. In particular, we can think of $v$ as ``almost parallel'' to $\mathcal{T}_{\alpha}/\Gamma_{g,n}$ whenever the quantity $r_{\alpha}(v)$ is very close to zero.

\begin{lemma}\label{l.Clairaut-ODE} Fix $c>1$. For every $(\sigma,\chi)\in\mathcal{B}$, $\alpha\in\sigma$,  $\gamma:I\to\mathcal{T}_{g,n}$ a WP-geodesic segment, and $t\in I$ with $\gamma(t)\in\Omega(\sigma,\chi,c)$, we have  
$$r_{\alpha}'(t) = O (f_{\alpha}(t)^3)$$
where $r_{\alpha}(t):=\sqrt{\langle \lambda_{\alpha}, \dot{\gamma}(t)\rangle^2 + \langle J\lambda_{\alpha}, \dot{\gamma}(t)\rangle^2}$ and $f_{\alpha}(t):=\sqrt{\ell_{\alpha}(\gamma(t))}$. 
\end{lemma}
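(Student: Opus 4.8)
The plan is to differentiate $r_\alpha(t)^2$ along the geodesic $\gamma$ and use Wolpert's estimate from Theorem~\ref{t.Wolpert-expansions} to control the derivative. Write $v=\dot\gamma(t)$, so that $r_\alpha(t)^2=\langle \lambda_\alpha,v\rangle^2+\langle J\lambda_\alpha,v\rangle^2$. Differentiating with respect to $t$ and using that $\gamma$ is a geodesic (so $\nabla_v v=0$), the Leibniz rule gives
\begin{equation*}
\frac{d}{dt}r_\alpha(t)^2 = 2\langle\lambda_\alpha,v\rangle\,\langle \nabla_v\lambda_\alpha, v\rangle + 2\langle J\lambda_\alpha,v\rangle\,\langle \nabla_v(J\lambda_\alpha),v\rangle,
\end{equation*}
where I have used that the Weil-Petersson connection is metric. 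Since $J$ is parallel for the K\"ahler metric $g_{WP}$, we have $\nabla_v(J\lambda_\alpha)=J\nabla_v\lambda_\alpha$, and the almost complex structure is an isometry, so $\langle J\lambda_\alpha,v\rangle\langle J\nabla_v\lambda_\alpha,v\rangle = \langle \lambda_\alpha, -Jv\rangle\langle \nabla_v\lambda_\alpha,-Jv\rangle$. The point of this algebra is that Theorem~\ref{t.Wolpert-expansions} tells us $\nabla_v\lambda_\alpha$ equals the explicit term $\tfrac{3}{2\pi\ell_\alpha^{1/2}}\langle v,J\lambda_\alpha\rangle J\lambda_\alpha$ up to an error of size $O(\ell_\alpha^{3/2}\|v\|_{WP})=O(\ell_\alpha^{3/2})$ since $v$ is a unit vector.

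Next I would substitute the Wolpert expansion into the two inner products $\langle\nabla_v\lambda_\alpha,v\rangle$ and $\langle\nabla_v\lambda_\alpha,-Jv\rangle$ and check that the contributions of the \emph{main term} $\tfrac{3}{2\pi\ell_\alpha^{1/2}}\langle v,J\lambda_\alpha\rangle J\lambda_\alpha$ to $\tfrac{d}{dt}r_\alpha^2$ cancel. Indeed, the main term is a multiple of $J\lambda_\alpha$; feeding it into the first summand produces $2\langle\lambda_\alpha,v\rangle\cdot\tfrac{3}{2\pi\ell_\alpha^{1/2}}\langle v,J\lambda_\alpha\rangle\langle J\lambda_\alpha,v\rangle$, and feeding it into the (rewritten) second summand produces $2\langle\lambda_\alpha,-Jv\rangle\cdot\tfrac{3}{2\pi\ell_\alpha^{1/2}}\langle v,J\lambda_\alpha\rangle\langle J\lambda_\alpha,-Jv\rangle$; using $\langle\lambda_\alpha,Jv\rangle=-\langle J\lambda_\alpha,v\rangle$ and $\langle J\lambda_\alpha,Jv\rangle=\langle\lambda_\alpha,v\rangle$ one sees these two expressions are negatives of each other. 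Hence only the error terms survive, and each is bounded by $\|\lambda_\alpha\|_{WP}\cdot O(\ell_\alpha^{3/2})$. Since $\|\lambda_\alpha\|_{WP}=\|\operatorname{grad}\ell_\alpha^{1/2}\|_{WP}$ is bounded (in fact $\to (2\pi)^{-1/2}$ as $\ell_\alpha\to 0$ by Wolpert's expansion, and is uniformly bounded on $\Omega(\sigma,\chi,c)$), we get $\tfrac{d}{dt}r_\alpha(t)^2=O(\ell_\alpha^{3/2})=O(f_\alpha(t)^3)$.

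Finally, to pass from $r_\alpha^2$ to $r_\alpha$, note $r_\alpha'(t)=\tfrac{1}{2r_\alpha(t)}\tfrac{d}{dt}r_\alpha(t)^2$, which is problematic only where $r_\alpha$ is small; but the main-term cancellation above in fact shows that the main term also does not contribute to $\tfrac{d}{dt}\langle\lambda_\alpha,v\rangle$ and $\tfrac{d}{dt}\langle J\lambda_\alpha,v\rangle$ up to the same kind of error, so one gets directly that each coordinate $\langle\lambda_\alpha,v\rangle$ and $\langle J\lambda_\alpha,v\rangle$ has derivative $O(\ell_\alpha^{3/2})$, whence $r_\alpha'(t)=O(f_\alpha(t)^3)$ by the triangle inequality for the Euclidean norm on $(\langle\lambda_\alpha,v\rangle,\langle J\lambda_\alpha,v\rangle)$ without dividing by $r_\alpha$. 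The main obstacle I anticipate is bookkeeping: verifying the precise cancellation of the main term requires care with the signs coming from $J$ and with the fact that Wolpert's formula controls $\nabla_v\lambda_\alpha$ rather than $\nabla_v(J\lambda_\alpha)$ directly, so one must invoke $\nabla J=0$ cleanly; also one must make sure the uniform bound on $\|\lambda_\alpha\|_{WP}$ and the implied constants in Theorem~\ref{t.Wolpert-expansions} are genuinely uniform over $\Omega(\sigma,\chi,c)$, which is exactly what Wolpert's statement provides.
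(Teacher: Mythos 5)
Your setup --- differentiating $r_\alpha^2$ along the geodesic, using $\nabla J=0$ for the K\"ahler metric, substituting Wolpert's expansion from Theorem~\ref{t.Wolpert-expansions}, and observing that the main terms cancel --- is exactly the paper's argument (the paper phrases the cancellation by writing $J\nabla_{\dot\gamma}\lambda_\alpha=-\tfrac{3\langle\dot\gamma,J\lambda_\alpha\rangle}{2\pi f_\alpha}\lambda_\alpha+O(f_\alpha^3)$ rather than by moving $J$ onto $v$, but the computation is the same). However, your final step, where you pass from $\tfrac{d}{dt}r_\alpha^2$ to $r_\alpha'$, contains a genuine error. You claim that the main-term cancellation shows that each coordinate $\langle\lambda_\alpha,v\rangle$ and $\langle J\lambda_\alpha,v\rangle$ separately has derivative $O(f_\alpha^3)$. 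It does not: from Wolpert's formula,
\begin{equation*}
\frac{d}{dt}\langle\lambda_\alpha,v\rangle=\langle\nabla_v\lambda_\alpha,v\rangle=\frac{3}{2\pi f_\alpha}\langle v,J\lambda_\alpha\rangle^2+O(f_\alpha^3),
\end{equation*}
and the main term here is a square --- it is nonnegative and can be of order $1/f_\alpha$ when $\langle v,J\lambda_\alpha\rangle$ is of order $1$, so it certainly does not cancel coordinate-wise. The cancellation you verified occurs only between the \emph{two} summands of $\tfrac{d}{dt}r_\alpha^2$, because there the two main contributions appear with opposite signs. So your proposed route around the division by $r_\alpha$ fails.

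The correct fix is already sitting in your computation, and is what the paper does: do not discard the prefactors $\langle\lambda_\alpha,v\rangle$ and $\langle J\lambda_\alpha,v\rangle$ multiplying the error terms by bounding them with $\|\lambda_\alpha\|_{WP}$. Each of these prefactors is bounded in absolute value by $r_\alpha(t)$ itself (by the definition of $r_\alpha$ as the Euclidean norm of the pair), so the surviving error terms give
\begin{equation*}
2r_\alpha(t)\,r_\alpha'(t)=O\bigl(r_\alpha(t)\,f_\alpha(t)^3\bigr),
\end{equation*}
and dividing by $2r_\alpha(t)$ yields $r_\alpha'(t)=O(f_\alpha(t)^3)$ with no trouble where $r_\alpha$ is small. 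With that one-line repair (and dropping the incorrect coordinate-wise claim), your proof coincides with the paper's.
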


\begin{proof} By differentiating $r_{\alpha}(t)^2=\langle \lambda_{\alpha}, \dot{\gamma}(t)\rangle^2 + \langle J\lambda_{\alpha}, \dot{\gamma}(t)\rangle^2$, we see that
$$2r_{\alpha}(t)r_{\alpha}'(t) = 2 \langle \lambda_{\alpha}, \dot{\gamma}(t)\rangle \langle \nabla_{\dot{\gamma}(t)}\lambda_{\alpha}, \dot{\gamma}(t)\rangle + 2 \langle J\lambda_{\alpha}, \dot{\gamma}(t)\rangle \langle J\nabla_{\dot{\gamma}(t)}\lambda_{\alpha}, \dot{\gamma}(t)\rangle.$$
Here, we used the fact that the WP metric is K\"ahler, so that $J$ is parallel. 

Now, we observe that, by Wolpert's formulas (cf. Theorem \ref{t.Wolpert-expansions}), one can write $\nabla_{\dot{\gamma}(t)}\lambda_{\alpha}$ and $J\nabla_{\dot{\gamma}(t)}\lambda_{\alpha}$ as 
$$\nabla_{\dot{\gamma}(t)}\lambda_{\alpha}=\frac{3\langle\dot{\gamma}(t), J\lambda_{\alpha}\rangle}{2\pi f_{\alpha}(t)} J\lambda_{\alpha} + O(f_{\alpha}(t)^3)$$
and
$$J\nabla_{\dot{\gamma}(t)}\lambda_{\alpha}=-\frac{3\langle\dot{\gamma}(t), J\lambda_{\alpha}\rangle}{2\pi f_{\alpha}(t)} \lambda_{\alpha} + O(f_{\alpha}(t)^3)$$

Since $\max\{|\langle\dot{\gamma}(t), \lambda_{\alpha}|, |\langle\dot{\gamma}(t), J\lambda_{\alpha}\rangle|\}\leq r_{\alpha}(t)$ (by definition), we conclude from the previous equations that
\begin{eqnarray*}
2r_{\alpha}(t)r_{\alpha}'(t) &=& \frac{3}{\pi f_{\alpha}(t)}
(\langle \lambda_{\alpha}, \dot{\gamma}(t)\rangle \langle J\lambda_{\alpha}, \dot{\gamma}(t)\rangle^2 - \langle \lambda_{\alpha}, \dot{\gamma}(t)\rangle \langle J\lambda_{\alpha}, \dot{\gamma}(t)\rangle^2) \\ &+& O(r_{\alpha}(t) f_{\alpha}(t)^3) \\
&=& 0 + O(r_{\alpha}(t)f_{\alpha}(t)^3).
\end{eqnarray*}
This proves the lemma.
\end{proof}

\subsection{Construction of certain long segments of WP geodesics close to $\partial\mathcal{M}_{g,n}$}

After these preliminaries on the geometry of the WP metric, we are ready to begin the proof of Theorem \ref{t.BMMW-A}. 

Fix $\alpha$ a non-separating (non-peripheral, homotopically non-trivial) simple closed curve. We want to locate certain regions near $\mathcal{T}_{\alpha}/\Gamma_{g,n}$ taking a long time to mix with the compact part of $\mathcal{M}_{g,n}$. For this sake, we will exploit Lemma \ref{l.Clairaut-ODE} to build nice sets of unit vectors traveling in an ``almost parallel'' way to $\mathcal{T}_{\alpha}/\Gamma_{g,n}$ for a significant amount of time. 

More precisely, recall  the complex line $L=\textrm{span}\{\lambda_{\alpha}, J\lambda_{\alpha}\}$. Intuitively, $L$  points in the normal direction to a ``copy'' of $\mathcal{T}_{\alpha}/\Gamma_{g,n}$ inside a level set of the function $\ell_{\alpha}^{1/2}$ as indicated below:

\begin{figure}[htb!]
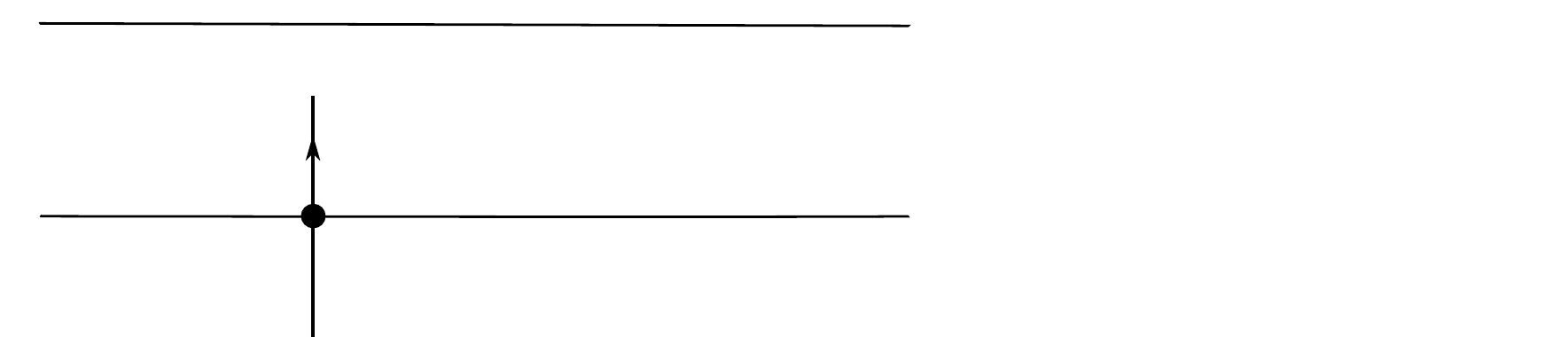
\end{figure}

Using $L$, we can formalize the notion of ``almost parallel'' vector to $\mathcal{T}_{\alpha}/\Gamma_{g,n}$.
We will show that unit vectors almost parallel to $\mathcal{T}_{\alpha}/\Gamma_{g,n}$ whose footprints are close to $\mathcal{T}_{\alpha}/\Gamma_{g,n}$ always generate geodesics staying near $\mathcal{T}_{\alpha}/\Gamma_{g,n}$ for a long time. More concretely, given $\varepsilon>0$, let us define the set 
$$V_{\varepsilon} := \{v\in T^1\mathcal{M}_{g,n}: f_{\alpha}(v)\leq \varepsilon, \, r_{\alpha}(v)\leq \varepsilon^2\}$$
where $f_{\alpha}(v) := \ell_{\alpha}^{1/2}(p)$ and $p\in\mathcal{M}_{g,n}$ is the footprint of the unit vector $v\in T^1\mathcal{M}_{g,n}$. Equivalently, $V_{\varepsilon}$ is the disjoint union of the pieces of spheres $S_{\varepsilon}(p):=\{v\in T^1_p\mathcal{M}_{g,n}: r_{\alpha}(v)\leq \varepsilon^2\}$ attached to points $p\in\mathcal{M}_{g,n}$ with $\ell_{\alpha}(p)\leq \varepsilon^2$. The following figure summarizes the geometry of $S_{\varepsilon}(p)$:

\begin{figure}[htb!]
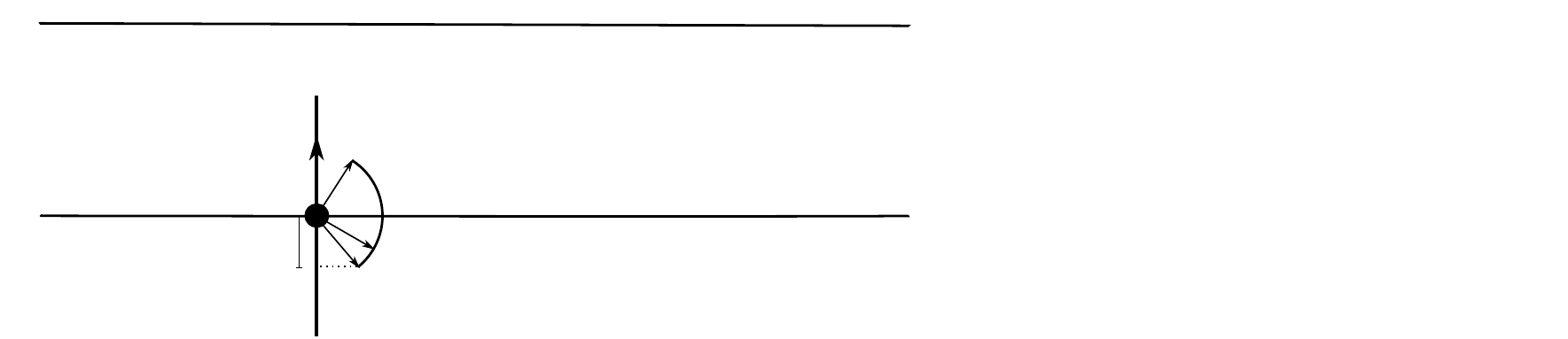
\end{figure}

In this setting, we will show that a geodesic $\gamma_v(t)$ originating at any $v\in V_{\varepsilon}$ stays in a $(2\varepsilon)^2$-neighborhood of $\mathcal{T}_{\alpha}/\Gamma_{g,n}$ for an interval of  time $[0, T]$ of size of order $1/\varepsilon$. 

\begin{figure}[htb!]
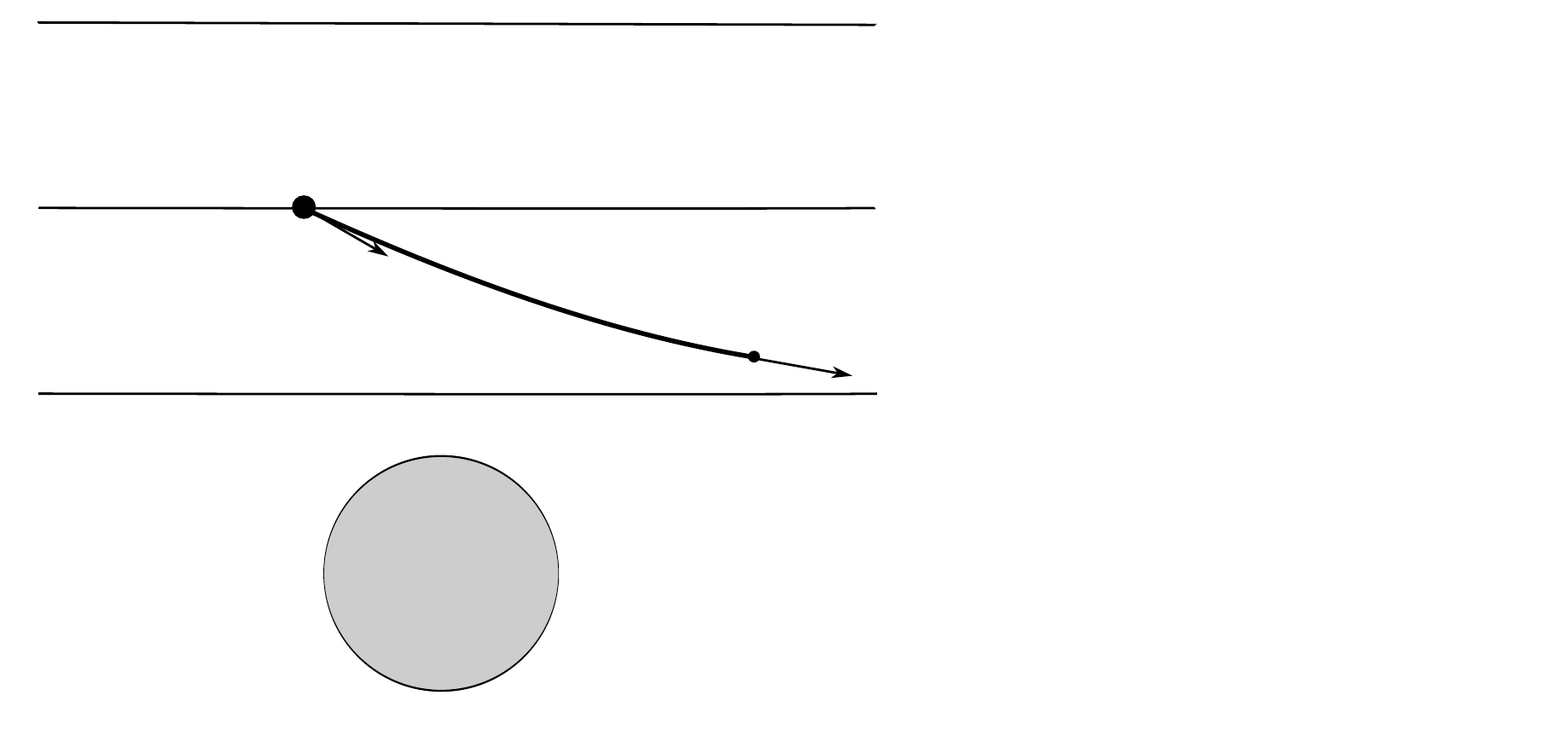
\caption{WP geodesic segment travelling almost parallel to $\partial \mathcal{M}_{g,n}$.}\label{f.BMMW2}
\end{figure}

\begin{lemma}\label{l.WP-mixing-time-lower-bound} There exists a constant $C_0>0$ (depending only on $g$ and $n$) such that 
$$\sqrt{\ell_{\alpha}(\gamma_v(t))}=f_{\alpha}(t)\leq 2\varepsilon$$
for all $v\in V_{\varepsilon}$ and $0\leq t\leq 1/C_0\varepsilon$. 
\end{lemma}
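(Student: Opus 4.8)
The plan is to run a bootstrap (continuity) argument on the interval where the geodesic is known to remain in a controlled region, using Lemma \ref{l.Clairaut-ODE} to bound the growth of $r_\alpha$ and the metric expansion $g_{WP}\sim 4\,dx_\alpha^2 + x_\alpha^6\,d\tau_\alpha^2$ (equivalently Theorem \ref{t.Wolpert2008}) to convert control of $r_\alpha$ into control of $\dot f_\alpha$. First I would fix $v\in V_\varepsilon$, set $\gamma=\gamma_v$, and let $T^\ast$ be the supremum of times $t$ such that $f_\alpha(s)\le 2\varepsilon$ for all $s\in[0,t]$; note $T^\ast>0$ since $f_\alpha(0)\le\varepsilon$. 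On $[0,T^\ast)$ the geodesic lies in a fixed region $\Omega(\sigma,\chi,c)$ (for a suitable combined length basis $(\sigma,\chi)$ with $\alpha\in\sigma$, furnished by Theorem \ref{t.Wolpert2008}, and $c$ depending only on $g,n$), so Lemma \ref{l.Clairaut-ODE} applies and gives $|r_\alpha'(t)| = O(f_\alpha(t)^3) = O(\varepsilon^3)$.

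The key computation is then: up to time $T^\ast$ we have $r_\alpha(t)\le r_\alpha(0) + O(\varepsilon^3)\,t \le \varepsilon^2 + O(\varepsilon^3 t)$. Next, the speed of $f_\alpha$ along the geodesic is governed by the $\lambda_\alpha$-component of $\dot\gamma$: from the asymptotic expansion $g_{WP}\sim 4\,dx_\alpha^2+\dots$ one has $x_\alpha = \ell_\alpha^{1/2}/\sqrt{2\pi^2}$, so $f_\alpha = \sqrt{2\pi^2}\,x_\alpha$ and, since $\lambda_\alpha=\operatorname{grad}\ell_\alpha^{1/2}$ has $\|\lambda_\alpha\|_{WP}$ bounded above and below by constants on $\Omega(\sigma,\chi,c)$, we get
$$|f_\alpha'(t)| = |\langle \operatorname{grad} f_\alpha, \dot\gamma(t)\rangle| = O\bigl(|\langle \lambda_\alpha, \dot\gamma(t)\rangle|\bigr) \le O(r_\alpha(t)).$$
Hence $f_\alpha'(t) = O(r_\alpha(t)) = O(\varepsilon^2) + O(\varepsilon^3 t)$, and integrating, $f_\alpha(t)\le \varepsilon + O(\varepsilon^2 t) + O(\varepsilon^3 t^2)$. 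Choosing $C_0$ large enough (depending only on $g,n$, through the implied constants), for $0\le t\le 1/(C_0\varepsilon)$ the correction terms $O(\varepsilon^2 t)$ and $O(\varepsilon^3 t^2)$ are each at most, say, $\varepsilon/2$, so $f_\alpha(t) < 2\varepsilon$ strictly. This strict inequality shows $T^\ast \ge 1/(C_0\varepsilon)$ (otherwise continuity would let us extend past $T^\ast$, contradicting its definition), which is exactly the claim.

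The main obstacle — and the point requiring care — is the circularity inherent in the bootstrap: Lemma \ref{l.Clairaut-ODE} and the metric comparison both require $\gamma(t)$ to stay inside $\Omega(\sigma,\chi,c)$, which is precisely the kind of statement we are trying to prove. The resolution is the standard open-closed continuity argument above: one works on the maximal interval $[0,T^\ast)$ where the bound $f_\alpha\le 2\varepsilon$ (hence membership in $\Omega(\sigma,\chi,c)$ for the appropriate $c$) holds, derives a \emph{strict} improvement of that bound on $[0,1/(C_0\varepsilon)]\cap[0,T^\ast)$, and concludes $T^\ast\ge 1/(C_0\varepsilon)$. A secondary point to verify is that while $\ell_\alpha$ stays small the other length parameters $\ell_\beta$, $\beta\in\chi$, and $\ell_{\alpha'}$, $\alpha'\in\sigma$, remain in the range $(1/c,c)$ dictated by $\Omega(\sigma,\chi,c)$; this follows because the geodesic has unit speed and the time interval has length $O(1/\varepsilon)$, but a priori that could be long — one instead restricts attention to a neighborhood $V$ of the relevant stratum as in Theorem \ref{t.Wolpert2008} and notes that the same expansion bounds $|\ell_\beta'|$, $|\ell_{\alpha'}'|$ by constants, so these coordinates drift by $O(1/\varepsilon)$; this last subtlety is genuinely handled in the surrounding text rather than here, so for the purposes of this lemma I would simply invoke that $\gamma_v([0,T^\ast))$ stays in a fixed $\Omega(\sigma,\chi,c)$ with $c=c(g,n)$, which is what Theorem \ref{t.Wolpert2008} and the smallness of $\ell_\alpha$ provide locally.
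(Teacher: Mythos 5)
Your proposal is correct and follows essentially the same route as the paper: define the maximal interval on which $f_\alpha\le 2\varepsilon$, use Lemma \ref{l.Clairaut-ODE} to get $|r_\alpha'|=O(\varepsilon^3)$ there, integrate to bound $r_\alpha(t)\le\varepsilon^2+O(\varepsilon^3 t)$, use $|f_\alpha'|\le r_\alpha$ (the paper gets this as an exact inequality since $f_\alpha'=\langle\dot\gamma,\lambda_\alpha\rangle$, so your extra $O(\cdot)$ is unnecessary but harmless), integrate again, and conclude the maximal time is at least $1/(C_0\varepsilon)$ by the same continuity argument. Your closing discussion of why the geodesic stays in a fixed $\Omega(\sigma,\chi,c)$ is a point the paper's proof passes over silently, so flagging it is reasonable care rather than a divergence.
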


\begin{proof} By definition, $v\in V_{\varepsilon}$ implies that $f_{\alpha}(0)\leq \varepsilon$. Thus, it makes sense to consider the maximal interval $[0,T]$ of time such that $f_{\alpha}(t)\leq 2\varepsilon$ for all $0\leq t\leq T$. 

By Lemma \ref{l.Clairaut-ODE}, we have that $r_{\alpha}'(s)=O(f_{\alpha}(s)^3)$, i.e., $|r_{\alpha}'(s)|\leq B\cdot f_{\alpha}(s)^3$ for some constant $B>1/4$ depending only on $g$ and $n$. In particular, $|r_{\alpha}'(s)|\leq B\cdot f_{\alpha}(s)^3\leq B\cdot (2\varepsilon)^3$ for all $0\leq s\leq T$. From this estimate, we deduce that 
$$r_{\alpha}(t) = r_{\alpha}(0)+\int_0^t r_{\alpha}'(s)\,ds\leq r_{\alpha}(0)+B\cdot (2\varepsilon)^3 t = r_{\alpha}(0)+8B\varepsilon^3t$$
for all $0\leq t\leq T$. Since the fact that $v\in V_{\varepsilon}$ implies that $r_{\alpha}(0)\leq\varepsilon^2$, the previous inequality tell us that 
$$r_{\alpha}(t)\leq \varepsilon^2+8B\varepsilon^3t$$
for all $0\leq t\leq T$. 

Next, we observe that, by definition, $f_{\alpha}'(t)=\langle\dot{\gamma}(t),\textrm{grad}\ell_{\alpha}^{1/2}\rangle = \langle\dot{\gamma}(t),\lambda_{\alpha}\rangle$. Hence, 
$$|f_{\alpha}'(t)|=|\langle\dot{\gamma}(t),\lambda_{\alpha}\rangle|\leq \sqrt{\langle\dot{\gamma}(t),\lambda_{\alpha}\rangle^2 + \langle\dot{\gamma}(t),J\lambda_{\alpha}\rangle^2} = r_{\alpha}(t)$$ 
By putting together the previous two inequalities and the fact that $f_{\alpha}(0)\leq\varepsilon$ (as $v\in V_{\varepsilon}$), we conclude that 
$$f_{\alpha}(T) = f_{\alpha}(0)+\int_0^T f_{\alpha}'(t) \, dt\leq \varepsilon + \varepsilon^2 T + 4B\varepsilon^3T^2$$
Since $T>0$ was chosen so that $[0,T]$ is the maximal interval with $f_{\alpha}(t)\leq 2\varepsilon$ for all $0\leq t\leq T$, we have that $f_{\alpha}(T)=2\varepsilon$. Therefore, the previous estimate can be rewritten as 
$$2\varepsilon\leq \varepsilon + \varepsilon^2T + 4B\varepsilon^3T^2$$
Because $B>1/4$, it follows from this inequality that $T\geq 1/C_0\varepsilon$ where $C_0:=8B$. 

In other words, we showed that $[0,1/C_0\varepsilon]\subset [0,T]$, and, \emph{a fortiori}, $f_{\alpha}(t)\leq 2\varepsilon$ for all $0\leq t\leq 1/C_0\varepsilon$. This completes the proof of the lemma.
\end{proof}

Geometrically, this lemma says that the WP geodesic flow does \emph{not} mix $V_{\varepsilon}$ with any fixed ball $U$ in the compact part of $\mathcal{M}_{g,n}$ of Riemann surfaces with systole $> (2\varepsilon)^2$: see Figure \ref{f.BMMW2} above. 

Once we have Lemma \ref{l.WP-mixing-time-lower-bound} in our toolbox, it is not hard to infer some upper bounds on the rate of mixing of the WP flow on $T^1\mathcal{M}_{g,n}$ when $3g-3+n>1$. 

\subsection{End of proof of Theorem \ref{t.BMMW-A}}

Suppose that the WP flow $\mathcal{WP}_t$ on $T^1\mathcal{M}_{g,n}$ has a rate of mixing of the form 
$$C_t(a,b) = \left|\int a\cdot b\circ\mathcal{WP}_t - \left(\int a\right)\left(\int b\right)\right|\leq C t^{-\gamma}\|a\|_{C^1}\|b\|_{C^1}$$
for some constants $C>0$, $\gamma>0$, for all $t\geq 1$, and for all choices of $C^1$-observables $a$ and $b$.

Let us fix once and for all an open ball $U$ (with respect to the WP metric) contained in the compact part of $\mathcal{M}_{g,n}$: this means that there 
exists $\varepsilon_0>0$ such that the systoles of all Riemann surfaces in $U$ are $\geq\varepsilon_0^2$. 

Take a $C^1$ function $a$ supported on the set $T^1U$ of unit vectors with footprints on $U$ with values $0\leq a\leq 1$ such that $\int a\geq \textrm{vol}(U)/2$ and $\|a\|_{C^1}=O(1)$: such a function $a$ can be easily constructed by smoothing the characteristic function of $U$ with the aid of bump functions. 
  
Next, for each $\varepsilon>0$, take a $C^1$ function $b_{\varepsilon}$ supported on the set $V_{\varepsilon}$ with values $0\leq b_{\varepsilon}\leq 1$ such that $\int b_{\varepsilon}\geq \textrm{vol}(V_{\varepsilon})/2$ and $\|b_{\varepsilon}\|_{C^1}=O(1/\varepsilon^2)$: such a function $b_{\varepsilon}$ can also be constructed by smoothing the characteristic function of $V_{\varepsilon}$ after taking into account the description of the WP metric near $\mathcal{T}_{\alpha}/\Gamma_{g,n}$ given by Theorem \ref{t.Wolpert2008} above and the definition of $V_{\varepsilon}$ (in terms of the conditions $\ell_{\alpha}^{1/2}\leq \varepsilon$ and $r_{\alpha}\leq\varepsilon^2$). Furthermore, this description of the WP metric $g_{WP}$ near $\mathcal{T}_{\alpha}/\Gamma_{g,n}$ combined with the asymptotic expansion $g_{WP}\sim 4dx_{\alpha}^2+x_{\alpha}^6d\tau_{\alpha}$ where $x_{\alpha}:=\ell_{\alpha}^{1/2}/\sqrt{2\pi^2}$ and $\tau_{\alpha}$ is a twist parameter (see the proof of Lemma \ref{l.wp-DM-bdry-nbhd-vol} above) says that $\textrm{vol}(V_{\varepsilon})\sim \varepsilon^8$: indeed, the condition $f_{\alpha}=\ell_{\alpha}^{1/2}\leq\varepsilon$ on footprints of unit tangent vectors in $V_{\varepsilon}$ provides a set of volume $\sim \varepsilon^4$ and the condition $r_{\alpha}\leq\varepsilon^2$ on unit tangent vectors in $V_{\varepsilon}$ with a fixed footprint provides a set of volume comparable to the Euclidean area $\pi\varepsilon^4$ of the Euclidean ball $\{\vec{v}\in\mathbb{R}^2: |v|\leq\varepsilon^2\}$ (cf. Theorem \ref{t.Wolpert2008}), so that 
$$\textrm{vol}(V_{\varepsilon}) = \int_{\{\ell_{\alpha}^{1/2}(p)\leq\varepsilon\}} \textrm{vol}(\{v\in T^1_p\mathcal{M}_{g,n}: r_{\alpha}(v)\leq\varepsilon^2\})\sim (\pi\varepsilon^4)\cdot \varepsilon^4\sim \varepsilon^8$$ 

In summary, for each $\varepsilon>0$, we have a $C^1$ function $b_{\varepsilon}$ supported on $V_{\varepsilon}$ with $0\leq b\leq 1$, $\|b_{\varepsilon}\|_{C^1} = O(1/\varepsilon^2)$ and $\int b_{\varepsilon}\geq c_0\varepsilon^8$ for some constant $c_0>0$ depending only on $g$ and $n$. 

Given $\gamma>10$, our plan is to use the observables $a$ and $b_{\varepsilon}$ to contradict the inequality
$$C_{t}(a,b_{\varepsilon})=\left|\int a\cdot b_{\varepsilon}\circ\mathcal{WP}_t - \left(\int a\right)\left(\int b_{\varepsilon}\right)\right|\leq 
C t^{-\gamma}\|a\|_{C^1}\|b_{\varepsilon}\|_{C^1}$$ 
for $t\sim 1/\varepsilon$ and $\varepsilon>0$ sufficiently small.  

In this direction, we apply Lemma \ref{l.WP-mixing-time-lower-bound} to obtain a constant $C_0>0$ such that $V_{\varepsilon}\cap \mathcal{WP}_{\frac{1}{C_0\varepsilon}}(T^1U)=\emptyset$ whenever $2\varepsilon<\varepsilon_0$. Indeed, since $V_{\varepsilon}$ is a symmetric set (i.e., $v\in V_{\varepsilon}$ if and only if $-v\in V_{\varepsilon}$), it follows from Lemma \ref{l.WP-mixing-time-lower-bound} that all Riemann surfaces in the footprints of $\mathcal{WP}_{-\frac{1}{C_0\varepsilon}}(V_{\varepsilon})$ have a systole $\leq (2\varepsilon)^2<\varepsilon_0^2$. Because we took $U$ in such a way that all Riemann surfaces in $U$ have systole $\geq \varepsilon_0^2$, we obtain $\mathcal{WP}_{-\frac{1}{C_0\varepsilon}}(V_{\varepsilon})\cap T^1U=\emptyset$, that is, $V_{\varepsilon}\cap \mathcal{WP}_{\frac{1}{C_0\varepsilon}}(T^1U)=\emptyset$, as it was claimed. 

Now, we observe that the function $a\cdot b_{\varepsilon}\circ \mathcal{WP}_t$ is supported on $\mathcal{WP}_{-t}(V_{\varepsilon})\cap T^1U$ because $a$ is supported on $T^1U$ and $b_{\varepsilon}$ is supported on $V_{\varepsilon}$. It follows from the claim in the previous paragraph, we deduce that $a\cdot b_{\varepsilon}\circ \mathcal{WP}_{\frac{1}{C_0\varepsilon}}\equiv 0$ whenever $2\varepsilon<\varepsilon_0$. Thus,   
$$C_{\frac{1}{C_0\varepsilon}}(a,b_{\varepsilon}) := \left|\int a\cdot b_{\varepsilon}\circ\mathcal{WP}_{\frac{1}{C_0\varepsilon}} - \left(\int a\right)\left(\int b_{\varepsilon}\right)\right| = 
\left(\int a\right)\left(\int b_{\varepsilon}\right)$$ 

By plugging this identity into the polynomial decay of correlations estimate $C_t(a,b_{\varepsilon})\leq Ct^{-\gamma}\|a\|_{C^1}\|b_{\varepsilon}\|_{C^1}$, we get 
$$\left(\int a\right)\left(\int b_{\varepsilon}\right) = C_{\frac{1}{C_0\varepsilon}}(a,b_{\varepsilon})\leq CC_0^{\gamma}\varepsilon^{\gamma}\|a\|_{C^1}\|b_{\varepsilon}\|_{C^1}$$
whenever $2\varepsilon<\varepsilon_0$ and $1/C_0\varepsilon\geq 1$. 

We affirm that the previous estimate implies that $\gamma\leq 10$. In fact, recall that our choices were made so that $\int a\geq \textrm{vol}(U)/2$ where $U$ is a fixed ball, $\|a\|_{C^1}=O(1)$, $\int b_{\varepsilon} \geq c_0\varepsilon^8$ for some constant $c_0>0$ and $\|b_{\varepsilon}\|_{C^1}=O(1/\varepsilon^2)$. Hence, by combining these facts and the previous mixing rate estimate, we get that 
$$\left(\frac{\textrm{vol}(U)}{2}\right) c_0\varepsilon^8\leq \left(\int a\right)\left(\int b_{\varepsilon}\right)\leq CC_0^{\gamma}\varepsilon^{\gamma}\|a\|_{C^1}\|b_{\varepsilon}\|_{C^1} = O(\varepsilon^{\gamma}\frac{1}{\varepsilon^2}),$$
that is, $\varepsilon^{10}\leq D \varepsilon^{\gamma}$, for some constant $D>0$ and for all $\varepsilon>0$ sufficiently small (so that $2\varepsilon<\varepsilon_0$ and $1/C_0\varepsilon\geq 1$). It follows that $\gamma\leq 10$, as we claimed. This completes the proof of Theorem \ref{t.BMMW-A}.

\begin{remark}\label{r.poly-mixing-Ck} In the statement of the previous proposition, the choice of $C^1$-norms to measure the rate of mixing of the WP flow is not very important. Indeed, an inspection of the construction of the functions $b_{\varepsilon}$ in the argument above reveals that $\|b_{\varepsilon}\|_{C^{k+\alpha}} = O(1/\varepsilon^{k+\alpha})$ for any $k\in\mathbb{N}$, $0\leq\alpha<1$. In particular, the proof of the previous proposition is sufficiently robust to show also that a rate of mixing of the form
$$C_t(a,b) = \left|\int a\cdot b\circ\varphi_t - \left(\int a\right)\left(\int b\right)\right|\leq C t^{-\gamma}\|a\|_{C^{k+\alpha}}\|b\|_{C^{k+\alpha}}$$
for some constants $C>0$, $\gamma>0$, for all $t\geq 1$, and for all choices of $C^1$-observables $a$ and $b$ holds only if $\gamma\leq 8+2(k+\alpha)$. 

In other words, even if we replace $C^1$-norms by (stronger, smoother) $C^{k+\alpha}$-norms in our measurements of rates of mixing of the WP flow (on $T^1\mathcal{M}_{g,n}$ for $3g-3+n>1$), our discussions so far will always give polynomial upper bounds for the decay of correlations. 
\end{remark}

We finish with the following result which   answers a question of F. Ledrappier to the authors.
In spite of the fact that the metric is not complete we have 
\begin{theorem} The WP metric is stochastically complete, i.e., the lifetime of the corresponding Brownian motion associated to the Laplace-Beltrami operator of the WP metric is infinite.
\end{theorem}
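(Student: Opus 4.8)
The plan is to verify one of the standard analytic criteria for stochastic completeness and to feed into it the volume growth information already established in Lemma~\ref{l.wp-DM-bdry-nbhd-vol}. The cleanest route is to use the criterion of Grigor'yan (see \cite{Grigoryan}): a geodesically complete Riemannian manifold $(N,h)$ is stochastically complete provided that, for some (hence any) fixed point $o\in N$,
\begin{equation*}
\int^{\infty}\frac{r\,dr}{\log \mathrm{vol}\,B(o,r)}=\infty,
\end{equation*}
where $B(o,r)$ is the metric ball of radius $r$. The subtlety is that the WP metric is \emph{not} geodesically complete, so Grigor'yan's theorem does not apply to $(\mathcal{M}_{g,n},g_{WP})$ directly; instead I would apply it to the metric completion $\overline{\mathcal{M}}_{g,n}$ (the Deligne--Mumford compactification), which is a compact metric space, after arguing that Brownian motion on the open part $\mathcal{M}_{g,n}$ agrees with Brownian motion on the completed space and that the latter never reaches the boundary in finite time with positive probability. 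Equivalently, and perhaps more robustly, one can work intrinsically on $\mathcal{M}_{g,n}$ and use the version of the non-explosion criterion phrased in terms of the Cauchy boundary: it suffices to show that Brownian motion started at an interior point does not hit $\partial\mathcal{M}_{g,n}$ in finite time almost surely.

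The key steps, in order. First, reduce a neighborhood of the Cauchy boundary to a model computation. By Wolpert's expansion $g_{WP}\sim\sum_{\alpha\in\sigma}(4\,dx_\alpha^2+x_\alpha^6\,d\tau_\alpha^2)$ near a stratum $\mathcal{T}_\sigma$ (used already in Lemma~\ref{l.wp-DM-bdry-nbhd-vol}), the distance to the boundary is comparable to $x:=\min_\alpha x_\alpha$ and the volume element is comparable to $\prod_\alpha x_\alpha^3$. Second, estimate the volume of a ball of radius $r$ around a fixed interior basepoint $o$: since $\overline{\mathcal{M}}_{g,n}$ has finite total WP volume (this is classical, and is consistent with $\mathrm{vol}(E_\rho)\simeq\rho^4$), we have the trivial bound $\mathrm{vol}\,B(o,r)\le \mathrm{vol}(\overline{\mathcal{M}}_{g,n})<\infty$ for all $r$, so $\log\mathrm{vol}\,B(o,r)$ is bounded above and the Grigor'yan integral $\int^\infty r\,dr/\log\mathrm{vol}\,B(o,r)$ diverges outright. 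Thus the only real content is the passage from the completeness hypothesis of Grigor'yan's theorem to the incomplete WP situation. Third, carry out that passage: near the boundary, in the model metric $4\,dx^2+x^6\,d\tau^2$ (times the transverse directions), compute the radial part of the Laplace--Beltrami operator acting on functions of $x$ alone, $\Delta u = \tfrac14 u'' + \tfrac14\tfrac{(\log(\text{vol density}))'}{1}\,u' + \cdots$, which up to constants is $u''+\tfrac{3k}{x}u'$ where $k=|\sigma|$ is the number of pinched curves; then use a standard comparison/Feller-test argument for the one-dimensional diffusion generated by this operator to check that the process cannot reach $x=0$ in finite time. Concretely, the scale function $s(x)$ and speed measure $m(x)$ of the radial diffusion satisfy $\int_0 s'(x)\,m((0,x])\,dx=\infty$ exactly when $3k\ge 1$, i.e., always; this is the Feller non-explosion (non-attainability of the boundary) test.

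I expect the main obstacle to be precisely the reconciliation of the incompleteness of the WP metric with the hypotheses of the off-the-shelf stochastic-completeness criteria: one must justify that the Brownian motion is well-defined up to its (a priori finite) lifetime, that its lifetime equals the first hitting time of the Cauchy boundary, and that this hitting time is almost surely infinite. The last of these is the heart of the matter and is where the volume growth $\mathrm{vol}(E_\rho)\simeq\rho^4$ enters decisively: the boundary is ``polynomially thick'' of codimension high enough (the radial density $x^{3k}$ with $3k\ge 3$) that the diffusion is pushed away from it, analogously to how Brownian motion in $\mathbb{R}^n$ with $n\ge 2$ never hits a point. A secondary technical point is that near a boundary stratum several curves may pinch simultaneously, so the model is a product of such $x^{3}\,dx$-type factors with a compact factor; but since each factor independently repels the process from its own $x_\alpha=0$ wall, and the finite-volume bound handles the behavior away from the boundary, assembling these local statements into a global non-explosion statement via a standard exhaustion argument should be routine.
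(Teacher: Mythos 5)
Your proposal reaches the right conclusion by a genuinely different route from the paper, and it identifies the correct mechanism: the boundary has effective transverse dimension $3k+1\geq 4>2$ (density $\sim x_\alpha^3\,dx_\alpha$ per pinched curve), so Brownian motion should miss it just as planar Brownian motion misses a point. The paper, however, does not run any diffusion comparison by hand. It observes that $\mathrm{vol}(N_\varepsilon)\sim\varepsilon^4$ gives lower Minkowski codimension $4>2$ for the Cauchy boundary, invokes Masamune \cite{Masamune} to conclude that the boundary is almost polar, and then applies the Grigor'yan--Masamune theorem \cite{GrigoryanMasamune} (finite volume, bounded and almost polar Cauchy boundary $\Rightarrow$ stochastic completeness). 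That is, the two steps you flag as ``the heart of the matter'' --- that the lifetime is governed by the hitting time of the Cauchy boundary, and that this hitting time is a.s.\ infinite --- are exactly what the cited theorems package up; the classical volume-growth criterion for complete manifolds that you start from plays no role in the paper (as you yourself note, it trivializes under finite volume).

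Two concrete points where your version has gaps. First, the Feller test is miswritten: with $s'(x)=x^{-3k}$ and $m((0,x])\sim x^{3k+1}$ the integrand $s'(x)\,m((0,x])\sim x$ is integrable near $0$, so the integral you display is finite, not infinite. The correct inaccessibility criterion is $\int_{0+} m\bigl((y,x_0]\bigr)\,ds(y)=\infty$, which does hold here because $s'(y)=y^{-3k}$ is non-integrable at $0$ once $3k\geq 1$; the threshold you state is right but the quantity is not. Second, and more substantially, the reduction of the actual Brownian motion to a one-dimensional radial diffusion is asserted rather than proved: Wolpert's expansions \cite{Wolpert2008} give the metric only up to uniform comparison near a stratum, the distance-to-boundary function is not smooth at the corners where several curves pinch, and a rigorous version of your argument would need an It\^o/Laplacian-comparison estimate for that distance function with uniform control of the error terms. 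This is precisely the technical work that the capacity/almost-polarity criterion of \cite{Masamune} is designed to absorb --- zero capacity of the boundary is stable under two-sided comparison of metrics, whereas a drift coefficient of the form $c/x$ is not obviously so. So your approach can be made to work, but filling these gaps would essentially reprove the cited results; the paper's proof is shorter and more robust for exactly that reason.
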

\begin{proof}
 This follows from the following argument. By Lemma~\ref{l.wp-DM-bdry-nbhd-vol}  the volume of a $\varepsilon$-neighborhood $N_{\varepsilon}$ of the Cauchy boundary $\partial \mathcal{M}(S)$ of the moduli space $\mathcal{M}(S)$ equipped with the WP metric is $\sim\varepsilon^4$. Therefore, the lower Minkowski codimension $\liminf\limits_{\varepsilon\to 0}\frac{\log\textrm{vol}(N_{\varepsilon})}{\log\varepsilon}=4$ is greater than $2$ and, by Masamune \cite{Masamune}, the Cauchy boundary $\partial \mathcal{M}(S)$ is an almost polar set. By combining this with the facts that the total volume of $\mathcal{M}(S)$ is finite and the Cauchy boundary $\partial \mathcal{M}(S)$ is bounded, we deduce from the results of Grigoryan and Masamune \cite{GrigoryanMasamune} that the WP metric is stochastically complete. 
\end{proof}


\begin{thebibliography}{ABCD99}
 
\bibitem[AGY]{AGY} A.Avila, S.Gouezel, J.C Yoccoz{\em Exponential mixing for the Teichm\"uller flow} 
Publication Mathematiques IHES {\bf 104} 2006 143-211


\bibitem[BMM]{BMM} J. Brock, H. Masur and Y. Minsky, {\em Asymptotics of Weil-Petersson geodesic. I. Ending laminations, recurrence, and flows}, Geom. Funct. Anal. 19 (2010), no. 5, 1229--1257.

\bibitem[BMW]{BMW} K. Burns, H. Masur and A. Wilkinson, {\em The Weil-Petersson geodesic flow is ergodic},
Ann. of Math. (2) \textbf{175} (2012), no. 2, 835--908.

\bibitem[BMMW]{BMMWii} K. Burns, H. Masur, C. Matheus and A. Wilkinson, {\em Rates of mixing for the Weil-Petersson geodesic flow II:  exponential mixing in exceptional moduli spaces}, in preparation. 

\bibitem[GM]{GrigoryanMasamune} A. Grigor'yan and J. Masamune, {\em Parabolicity and stochastic completeness of manifolds in terms of the Green formula}, J. Math. Pures Appl. (9) \textbf{100} (2013), no. 5, 607--632. 

\bibitem[H]{H} U.Hamenstadt {\em Teichm\"uller flow and Weil-Petersson flow}
arXiv 1505-01113

\bibitem[Mas]{Masamune} J. Masamune, {\em Essential self-adjointness of Laplacians on Riemannian manifolds with fractal boundary}, Comm. Partial Differential Equations \textbf{24} (1999), no. 3-4, 749--757.

\bibitem[McM]{McMullen} C. McMullen, {\em Thermodynamics, dimension and the Weil-Petersson metric}, Invent. Math. \textbf{173} (2008), no. 2, 365--425.

\bibitem[Mir]{Mirzakhani} M. Mirzakhani, {\em Growth of the number of simple closed geodesics on hyperbolic surfaces}, Ann. of Math. (2) \textbf{168} (2008), no. 1, 97--125.

\bibitem[Wo83]{Wolpert1983}
Scott Wolpert:
\emph{On the symplectic geometry of deformations of a hyperbolic surface}.
Ann. of Math. (2) {\bf 117} (1983), no. 2, 207--234.

\bibitem[Wo03]{Wolpert2003}
Scott Wolpert:
{\em Geometry of the Weil-Petersson completion of Teichm\"uller space}.
Surveys in differential geometry, Vol. VII (Boston, MA, 2002), 357--393, Surv. Differ. Geom., \textbf{VIII}, Int. Press, Somerville, MA, 2003.

\bibitem[Wo08]{Wolpert2008}
Scott Wolpert:
{\em Behavior of geodesic-length functions on Teichm\"uller space}.
J. Differential Geom. \textbf{79} (2008), no.~2, 277--334.

\bibitem[Wo09]{Wolpert2009}
Scott Wolpert:
{\em Extension of the Weil-Petersson connection}.
Duke Math. J. {\bf 146} (2009), no. 2, 281--303.

\end{thebibliography}


\end{document}